\newtheorem{lemma}{Lemma}
\newtheorem{prop}{Proposition}
\newtheorem{defi}{Definition} 
\newtheorem*{namedthm*}{\namedthmname}
\newenvironment{namedthm}[1]
  {\newcommand\namedthmname{#1}\begin{namedthm*}}
  {\end{namedthm*}}
\newcommand{\PSL}{\mathrm{L}}
\newcommand{\abs}[1]{\left\vert#1\right\vert}         %  valore assoluto
\newcommand{\set}[1]{\left\{#1\right\}}               %  insieme
\newcommand{\seq}[1]{\left<#1\right>}                 %  sottogr generato
\newcommand{\Aut}[1]{\mathrm{Aut}(#1)}
\newcommand{\C}{\mathbb{C}}
\newcommand{\Stab}{\mathrm{Stab}}
\newcommand{\ov}[1]{\overline{#1}}
\begin{document}
 \author[Fumagalli]{Francesco Fumagalli}
    \address{Dipartimento di Matematica  e Informatica ``U.\,Dini" \\
   Universit\`a di Firenze\\
   Viale Morgagni 67A, \ I-50134 Firenze, Italy}
   
   \email{francesco.fumagalli\,@\,unifi.it}
   \author[Leinen]{Felix Leinen}
   \address{Institute of Mathematics \\
   Johannes Gutenberg-University\\
   D$-$55099 \,Mainz, Germany}
   \email{Leinen\,@\,uni-mainz.de}
   
   \author[Puglisi]{Orazio Puglisi}
   \address{Dipartimento di Matematica  e Informatica ``U.\,Dini"\\
   Universit\`a di Firenze\\ 
   Viale Morgagni 67A, \ I-50134 Firenze, Italy}
   \email{orazio.puglisi\,@\,unifi.it}
   
\title{An upper bound for the nonsolvable length of\\ 
a finite group in terms of its shortest law}
% Words, permutations, and the nonsolvable length of a
% finite group (Shalev)
\date{\today}
\dedicatory{In memory of our friend Carlo Casolo}
\begin{abstract}
Every finite group $G$ has a normal series each of 
whose factors is either a solvable group or a direct product of 
non-abelian simple groups. The minimum number of nonsolvable factors, 
attained on all possible such series in $G$, is called the 
\emph{nonsolvable length} $\lambda(G)$ of $G$. 
In the present paper, we prove a theorem about permutation 
representations of 
groups of fixed nonsolvable length. As a consequence, we show that  
in a finite group of nonsolvable length at least $n$, no non-trivial 
word of length at most $n$ (in any number of variables) can be a law. 
This result is then used to give a bound on $\lambda(G)$ in terms 
of the length of the shortest law of $G$, thus confirming a conjecture 
of {\sc Larsen}. Moreover, we give a positive answer to a problem raised by 
{\sc Khukhro} and {\sc Larsen} concerning the non-2-solvable length of 
finite groups.\vskip1ex
\noindent
MSC2020$\colon$ 20F22, 20B05, 20D99.\\ 
Keywords$\colon$ finite groups, nonsolvable length, group laws, word maps.
   
\end{abstract}
\maketitle

%%%%%%%%%%%%%%%%%   Section 1     %%%%%%%%%%%%%%%%%%%%%%%%%%%%%

\pagestyle{myheadings}
\markright{{\large\sc an upper bound for the nonsolvable length}}
\markleft{{\large\sc fumagalli $\mid$ leinen $\mid$ puglisi}}

\section{Introduction}

Let $F_\infty$ be the free group of countably infinite rank, with free generators 
$x_i$ $(i\in\mathbb N$). Consider a word $w=w(x_1, \dots, x_k)\in F_\infty$.
For any group $G$, the map $\phi_w\colon G^k\longrightarrow G$, defined 
by $\phi_w(g_1, \dots, g_k)= w(g_1, \dots, g_k)$, is called the 
\emph{word map} induced by $w$. The \emph{verbal subgroup} $w(G)$ of $G$ 
is generated by the image of $\phi_w$. The word $w$ is said to be a 
\emph{law} in $G$, when $w(G)=1$.  

In recent years, quite a large number of papers was devoted to 
questions related to word maps, leading 
to an impressive series of deep results. 
Particularly remarkable achievements were 
the proof of {\sc Ore}'s conjecture (see \cite{ore}), 
that every element in a finite 
non-abelian simple group is a commutator,
and a series of results on {\sc Waring} type
problems (see \cite{larsen1} and \cite{larsen2}). The 
interested reader should consult the survey \cite{sha} by {\sc Shalev} 
and its extensive 
bibliography in order to get a clear picture of the research in this area.

Given a word $w$ and a group $G$, one may ask under which circumstances 
$w$ is not a law in $G$: Which properties of $G$ ensure, that 
$w(G)\not=1$?   \
One of the first results in this direction is contained in 
\cite{jones}: Given a non-trivial word 
$w$, there exist only a finite number of finite non-abelian simple 
groups, which admit $w$ as a law. In other words, for every non-trivial 
$w$, there exists a natural number $N=N(w)$ such that the
verbal subgroup $w(S)$ of any finite 
non-abelian simple group $S$ of order larger than $N$ is non-trivial. 

Properties of word maps in arbitrary finite groups have been 
investigated in \cite{bors}: In particular,  
for certain words $w=w(x_1,\ldots,x_k)\in F_\infty$, 
the nonsolvable length $\lambda(G)$ of $G$ 
(see above abstract) can be bounded in terms of the sizes of the fibers 
of the verbal map $\phi_w\colon G^k\longrightarrow G$. 
This provides evidence in favor of 
the following conjecture of {\sc Michael Larsen} (see 
\cite[Conjecture 1.3]{bors}).
 
\begin{namedthm}{Conjecture}
There exists a function $g\colon\mathbb N\longrightarrow \mathbb N$ 
such that $\lambda(G)\leq g(\nu(G))$ for every finite group $G$.
Here,\vskip1ex 
\phantom{.}\hfill
$\nu(G)=\min\big\{\left|w\right| ~\,\big|~\, w 
\textrm{ is a non-trival reduced law in } G\big\}$\,,
\hfill\phantom{.}\vskip1ex
\noindent
where $|w|$ denotes the length of the reduced word \,$w\in F_\infty$.
\end{namedthm}

In the present paper, we shall confirm this conjecture in the following form.

\begin{namedthm}{Theorem A}\label{thm:A}
Let $G$ be any finite group. Then $\lambda(G)<\nu(G)$.
\end{namedthm}

Our bound on the non-solvable length can probably be improved:
At least when $w$ is not a commutator word, we can use  
\cite[Proposition 5.10]{fuma} in order to see, that the nonsolvable length 
$\lambda(G)$ is bounded by $\log_2(\left|w\right|)$. 
This argument uses the fact, that a law of the form \,$x_1^n\,\tilde w$ 
\,with $\tilde w\in F_\infty'$ always implies the law  $x_1^n$.

This observation encourages us to set up the following conjecture.

\begin{namedthm}{Conjecture}
The nonsolvable length of any finite group $G$ can be bounded by a function
in $O(\log(\nu(G)))$.
\end{namedthm}

We shall obtain Theorem A as a consequence of the following main result.

\begin{namedthm}{Theorem B}\label{thm:B}
Let $G$ be a finite group of nonsolvable length $\lambda(G)=n$. 
Then, for every non-trivial word $w=w(x_1,\ldots,x_k)\in F_\infty$ 
of length at most $n$, 
there exists $\ov{g}=(g_1, \dots, g_k)\in G^k$ such that 
$w(\ov{g})=w(g_1, \dots, g_k)\not=1$. 
\end{namedthm}

When proving Theorem B, one realizes immediately, that a detailed knowledge 
of the action of the group on the components of certain non-abelian 
composition factors is needed. For this reason, we shall derive Theorem B 
from a result 
about permutation representations of groups of fixed nonsolvable length,
which might be of interest in its own right. 

In order to formulate this result, we introduce some further
notation. Let $w=y_1y_2\cdots y_n$ \big(with 
$y_i\in\{x_1^{\pm1},\ldots,x_k^{\pm1}\}$\big)
be a reduced word in $F_\infty$.
Then we shall need to consider its \emph{partial subwords}, namely the 
words  
$w_i = y_1y_2 \ldots y_i$ for $0\le i\le n$ (where $w_0=1$). 

\begin{defi}\label{def:P_n} 
Let $G$ be a finite group acting faithfully on the set $\Omega$.\vskip0ex
We say that $G$ satisfies property $\mathcal P_n$ 
in its action on $\Omega$ \big(and write $G\in\mathcal P_n(\Omega)$\big), if 
for every non-trival reduced word 
$w=w(x_1,\ldots,x_k)\in F_\infty$ of length $n$, 
there exist $\omega\in \Omega$ and $\overline{g}\in G^k$, such that 
the sequence $\{ \omega w_i(\overline{g})\}_{i=0}^{n}$
consists of $n+1$ distinct elements in $\Omega$.\vskip0ex
We write $G\in\mathcal P_n$ whenever $G\in\mathcal P_n(\Omega)$ 
for every faithful $G$-set $\Omega$.
\end{defi}

Clearly, if $G\in\mathcal P_n(\Omega)$ for some faithful $G$-set 
$\Omega$, then $w(G)\neq 1$ for every non-trivial reduced word $w$ 
of length at most $n$. 
However, this property is much stronger than the non-triviality of $w(G)$. 
Indeed, when $G\in\mathcal P_n(\Omega)$, it is possible to find 
some $\ov{g}\in G^k$ such that all the elements  
$w_1(\ov{g}), w_2(\ov{g}), \dots , w_n(\ov{g})=w(\ov{g})$ are non-trivial 
and pairwise distinct.

Theorem B will be a consequence of the following result.
\begin{namedthm}{Theorem C}\label{thm:C}
Let $G$ be a finite group with $\lambda(G)=n$, and let $\Omega$ be a
faithful transitive $G$-set. Then, for every $\omega\in\Omega$ and for every 
non-trivial reduced word 
$w=w(x_1,\ldots,x_k)\in F_\infty$ of length $n$,
there exist a Sylow $2$-subgroup $P$ of $G$ and a tuple $\overline{g}\in P^k$ 
such that the points \
$
\omega w_0(\overline{g}),\,\omega w_1(\overline{g}),\, 
\dots ,\, \omega w_n(\overline{g})
$ \
are pairwise distinct. In particular, $G\in\mathcal P_n$. 
\end{namedthm}

Theorem C says: If $G$ has nonsolvable length $n$, 
and if $H$ is a core-free subgroup of $G$, then for every 
non-trivial word $w=w(x_1,\ldots,x_k)$ of length $n$ there exists a $k$-tuple
$\ov{g}$ of elements from a Sylow $2$-subgroup of $G$ such that the cosets 
$$ H,\ Hw_1(\ov{g}),\ \dots\ldots,\ H w_{n-1}(\ov{g}),\  H w_{n}(\ov{g})=H w(\ov{g})$$ 
are pairwise distinct.

The proof of Theorem C relies heavily on the existence of so called 
\emph{rarefied} subgroups. 
In \cite[Theorem 1.1]{fuma} it has been proved, that
every finite group of nonsolvable length $m$ contains 
\emph{$m$-rarefied} subgroups. These are subgroups  
of the same nonsolvable length $m$, with a very restricted structure. 
In particular, the non-abelian composition factors of an $m$-rarefied group 
belong to the class
$$\mathcal{L}=\big\{\PSL_2(2^r),\, \PSL_2(3^r),\, \PSL_2(p^{2^a}),\, 
\PSL_3(3),\, \null^2B_2(2^r)~\,\big\vert~\,
 p,r\textrm{ odd primes}, \, 
 a\in \mathbb N\big\}.$$
The precise definition of $m$-rarefied 
groups will be given in section \ref{sec:strategy}.

In proving Theorem C, we shall reduce ourselves to the case of rarefied groups 
and then use the constraints on the structure of such 
groups in order to complete the proof. 
The proof of Theorem C depends on the classification of finite simple groups. 

We finally note, that Theorem C partly solves a question of {\sc Evgenii Khukhro} 
and {\sc Pavel Shumyatsky}. They have shown in \cite[Theorem 1.1]{khukhro}, that
the nonsolvable length of any finite group $G$ is bounded by $2L_2(G)+1$, where $L_2(G)$
denotes the maximum of the 2-lengths of the solvable subgroups of $G$.
This bound was improved in \cite[Theorem 5.2]{fuma} to $L_2(G)$.
In this context, {\sc Khukhro} and {\sc Shumyatsky} raised the following question 
\cite[Problem 1.3]{khukhro}.

\begin{namedthm}{Problem}
For a given prime $p$ and a given proper group varity $\mathfrak V$, is there a bound
for the non-$p$-solvable length of finite groups whose Sylow $p$-subgroups belong 
to $\mathfrak V$?
\end{namedthm}

Theorem C gives an affirmative answer in the case when $p=2$.

%%%%%%%%%%%%%%%% Section 2 %%%%%%%%%%%%%%%%%%%%%%

\section{Preliminaries on group actions and words}
Consider any non-trivial reduced word $w=w(x_1,\ldots,x_k)\in F_\infty$
of length $n$. Let $w=y_1\cdots y_n$ with $y_j=x_{i_j}^{\epsilon_j}$ and
$\epsilon_j\in\{\pm1\}$ for all $j$. As before, we let $w_0=1$ and
$$
w_j=y_0\cdots y_j\quad\text{and}\quad 
\widetilde w_j=\left\{
\begin{array}{ll}
w_j^{-1} & \text{if}~~\epsilon_j=-1\\[.5ex]
w_{j-1}^{-1} & \text{if}~~\epsilon_j=+1
\end{array}
\quad\text{for}\quad 1\le j\le n.
\right.
$$

For $k$-tuples $\ov g=(g_1,\ldots,g_k)$ and 
$\ov h=(h_1,\ldots,h_k)$ of elements in a group $G$, we let
$\ov g\ov h=(g_1h_1,\ldots,g_kh_k)$.\vskip1ex 

%Whenever $A$ is a
%subgroup of $G$, we write $\ov{g}\equiv \ov{h} \pmod{A}$ instead of $Ag_i=Ah_i$, 
%for $1\le i\le k$. \\

The proof of the following lemma is straightforward 
(see \cite[Lemma 2.1]{leinen}).

\begin{lemma}\label{lem:word_basic_0}
Let $N\trianglelefteq G$. 
Consider $\ov{g}\in G^k$ and $\ov{b}\in N^k$. Then,\vspace*{1ex} 
in the above notation,\quad
$
w(\ov{b}\ov{g})=a_1^{\epsilon_1}\cdots 
a_n^{\epsilon_n}w(\overline{g})\in Nw(\ov{g})\quad\text{with}\quad
a_j=(b_{i_j})^{\widetilde{w}_{j}(\overline{g})}\quad\text{for all }~j.
$\vskip2ex
\end{lemma}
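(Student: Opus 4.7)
The plan is to induct on the length $n$ of the reduced word $w$. The asymmetric definition of $\widetilde w_j$ — namely $w_j^{-1}$ when $\epsilon_j=-1$ and $w_{j-1}^{-1}$ when $\epsilon_j=+1$ — is tailored so that, when one evaluates $(b_{i_j}g_{i_j})^{\epsilon_j}$ in context, each $b_{i_j}^{\pm 1}$ gets conjugated by exactly the partial product needed to move it to the left of $w(\ov g)$. Thus the proof amounts to repeatedly sliding the $b_{i_j}^{\pm1}$'s past the appropriate partial evaluations and collecting them as $a_j^{\epsilon_j}=(b_{i_j})^{\epsilon_j\widetilde w_j(\ov g)}$.

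For the base case $n=1$, both signs are a one-line check. When $\epsilon_1=+1$, $w_0=1$ forces $a_1=b_{i_1}$, and $(b_{i_1}g_{i_1})=a_1\cdot w(\ov g)$. When $\epsilon_1=-1$, expanding $(b_{i_1}g_{i_1})^{-1}=g_{i_1}^{-1}b_{i_1}^{-1}=(g_{i_1}^{-1}b_{i_1}g_{i_1})^{-1}\,g_{i_1}^{-1}$ shows the conjugating element is exactly $w_1(\ov g)^{-1}=g_{i_1}$, matching $\widetilde w_1=w_1^{-1}$.

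For the inductive step, factor $w=w_{n-1}\,y_n$ and apply the inductive hypothesis to $w_{n-1}(\ov b\ov g)$; it remains to multiply on the right by $(b_{i_n}g_{i_n})^{\epsilon_n}$. If $\epsilon_n=+1$, one rewrites $w_{n-1}(\ov g)\,b_{i_n}g_{i_n}$ as $(b_{i_n})^{w_{n-1}(\ov g)^{-1}}\,w_n(\ov g)$, introducing $a_n$ with $\widetilde w_n=w_{n-1}^{-1}$. If $\epsilon_n=-1$, one rewrites $w_{n-1}(\ov g)\,g_{i_n}^{-1}b_{i_n}^{-1}=w_n(\ov g)\,b_{i_n}^{-1}=a_n^{-1}\,w_n(\ov g)$ with $a_n=(b_{i_n})^{w_n(\ov g)^{-1}}$, matching $\widetilde w_n=w_n^{-1}$. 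In both cases the prescribed form of $a_n$ emerges automatically.

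The coset statement is then immediate: since $N\trianglelefteq G$ and every $b_{i_j}\in N$, each conjugate $a_j=(b_{i_j})^{\widetilde w_j(\ov g)}$ lies in $N$, so $a_1^{\epsilon_1}\cdots a_n^{\epsilon_n}\in N$ and hence $w(\ov b\ov g)\in N\,w(\ov g)$. There is no real obstacle; the lemma is a piece of notational bookkeeping that isolates a ``quasi-derivation'' identity for word maps, to be used repeatedly in later arguments when tracking how word values transform under multiplication by elements of a normal subgroup.
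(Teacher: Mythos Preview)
Your proof is correct and is precisely the kind of straightforward inductive bookkeeping the paper has in mind; the paper itself omits the argument entirely, merely citing \cite[Lemma 2.1]{leinen} and calling it ``straightforward''. Your case analysis on $\epsilon_n=\pm1$ correctly identifies why the asymmetric definition of $\widetilde w_j$ is exactly what is needed to slide each $b_{i_j}^{\pm1}$ to the left of $w(\ov g)$.
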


The following situation will come up several 
times in the course of our analysis: 
A normal subgroup of a finite group $G$ is 
a direct product of 
subgroups $M_i$ $(0\le i\le m)$, which are  
all isomorphic to a certain group $M$ and which are 
permuted transitively under the conjugation with elements from $G$. 

\begin{lemma}\label{induced module}
In the above situation, let $H=N_G(M_0)$ 
and $\widehat{M_{0}}=\prod_{i\not=0}M_i$. 
Suppose, that 
$\mathbb F$ is a field and $W$ is an irreducible 
$\mathbb F H$-module satisfying $[W,M_0]\neq 0$ and 
$[W,\widehat{M_{0}}]=0$.
Then the induced $\mathbb FG$-module 
$V=\mathrm{Ind}{\stackrel{G}{_H}}(W)$ is irreducible.
\end{lemma}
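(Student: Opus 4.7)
The plan is to exploit the decomposition of $V=\mathrm{Ind}_{H}^{G}(W)$ as an $\F N$-module and show that its summands are distinguished by which component of $N$ acts non-trivially on them. First I would fix a transversal $t_0=1,t_1,\ldots,t_m$ of $H$ in $G$ chosen so that $t_i^{-1}M_it_i=M_0$ for every $i$; this is possible by the transitivity of the $G$-action on $\{M_0,\ldots,M_m\}$. Setting $V_i:=t_i\otimes W$, one gets $V=\bigoplus_{i=0}^{m}V_i$, and for $n\in N$ the identity $n\cdot(t_i\otimes w)=t_i\otimes(t_i^{-1}nt_i)w$ shows that $\widehat{M_i}$ acts trivially on $V_i$ (since $t_i^{-1}\widehat{M_i}t_i=\widehat{M_0}$ annihilates $W$), whereas $M_i$ acts through the isomorphism $M_i\to M_0,\ m\mapsto t_i^{-1}mt_i$, followed by its given action on $W$.

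The central step will be to show that no $\F M_0$-composition factor of $W$ is trivial. Since $M_0\trianglelefteq H$ and $W$ is $\F H$-irreducible, a standard Clifford-type argument (pick any irreducible $\F M_0$-submodule $U\le W$ and note that $\sum_{h\in H}hU$ is a non-zero $\F H$-submodule of $W$, hence equal to $W$) shows that $W\!\downarrow_{M_0}$ is semisimple and isotypically supported on the $H$-conjugates of $U$. The submodule $W^{M_0}$ of $M_0$-fixed vectors is $H$-invariant, so the hypothesis $[W,M_0]\ne 0$ forces $W^{M_0}=0$; in particular $U$ is non-trivial, and hence so is every composition factor of $W\!\downarrow_{M_0}$. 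Transporting through $t_i$, every composition factor of $V_i\!\downarrow_{N}$ is an irreducible $\F N$-module on which $M_i$ acts non-trivially while $\widehat{M_i}$ acts trivially. In particular the composition factors of distinct $V_i$ belong to disjoint isomorphism classes, so $V\!\downarrow_N$ is semisimple and every $\F N$-submodule $U'$ of $V$ decomposes as $U'=\bigoplus_{i}(U'\cap V_i)$.

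To finish, I would take any non-zero $\F G$-submodule $U'\le V$. Some $U'\cap V_i$ is non-zero by the previous step. The setwise $G$-stabiliser of $V_i$ is $H_i=t_iHt_i^{-1}=N_G(M_i)$, and the map $t_i\otimes w\mapsto w$ identifies $V_i$ with $W$ as an $\F H_i$-module via the isomorphism $H_i\to H,\ h\mapsto t_i^{-1}ht_i$. Hence $V_i$ is $\F H_i$-irreducible, forcing $U'\cap V_i=V_i$. Because $G$ permutes the $V_i$ transitively (matching its transitive action on $\{M_0,\ldots,M_m\}$), $U'$ must contain every $V_j$ and so $U'=V$. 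The main obstacle is the middle paragraph: both hypotheses $[W,M_0]\ne 0$ and $[W,\widehat{M_0}]=0$ are essential there---the first ensures the $V_i$ are distinguishable at the $N$-module level, the second that the decomposition $V=\bigoplus V_i$ is $\F N$-stable at all.
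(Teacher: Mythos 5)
Your argument is correct, and it shares the paper's overall skeleton (decompose $V$ into the blocks indexed by a transversal, note that $M_i$ acts non-trivially only on the $i$-th block, show a non-zero submodule meets some block, then finish with irreducibility of the block over its stabiliser and transitivity of $G$ on the blocks). Where you genuinely diverge is in the middle step. The paper gets $U\cap W_1\neq 0$ by a single elementary computation: since $W$ is an irreducible $H$-module and the normal subgroup $M_0$ acts non-trivially, $C_{W_1}(M_0)=0$; so for $u=\sum_t u_t\in U$ with $u_1\neq 0$ one picks $x\in M_0$ not centralising $u_1$ and observes that $[u,x]=u_1(x-1)$ is a non-zero element of $U\cap W_1$, because $x-1$ annihilates every other block. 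You instead invoke Clifford theory for $M_0\trianglelefteq H$ to see that $W\!\downarrow_{M_0}$ is semisimple with no trivial composition factor, deduce that the blocks $V_i$ are isotypically disjoint as modules for $M^*=\prod_i M_i$, and conclude that any submodule splits along the block decomposition. Both routes are sound; the paper's commutator trick is lighter (it needs only the vanishing of fixed points, not semisimplicity or the isotypic decomposition), while your version isolates a slightly stronger structural fact (the $\F M^*$-isotypic components of $V$ are exactly the $V_i$) that would survive in settings where one wants to control all $M^*$-submodules rather than just $G$-submodules.
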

\begin{proof}
Let $T$ be  a transversal to $H$ in $G$ 
containing $1$. 
Then
$$ V=\mathrm{Ind}{\stackrel{G}{_H}}(W)=
W\otimes_H\mathbb FG=
\oplus_{t\in T}(W\otimes_{H} t) $$
and each of the subspaces $W_t=W\otimes_H t$ $(t\in T)$ is a $G$-block. 
If  
$i\in\set{0,1,\ldots,m}$  and $t\in T$, then 
$[W_t,M_i]=0$ 
whenever $M_i\not={M_0}^t$. The subgroup $M_0$ has 
trivial centralizer in $W_1$, 
because $W$ is an irreducible $H$-module, $M_0$ is 
normal in $H$ and acts non-trivially on $W$. 
If $U$ is a $G$-submodule of $V$ and $u\in U$ is a  
non-trivial element, 
write $u=\sum_{t\in T}u_t$, with $u_t\in W_t$ for  
all $t\in T$. 
Without loss of generality, we may assume 
that $u_1\not=0$ and pick $x\in M_0$ not 
centralizing $u_1$.
Then $[u,x]=u_1(x-1)$ is still in $U$, hence 
$U\cap W_1\not=0$. 
Since $U\cap W_1$ is an $H$-module, it must be 
the whole $W_1$. 
Once we know that $W_1\leq U$ we immediately 
deduce $U=V$.
\end{proof}

The following lemma generalizes an idea from 
\cite{batta}.

\begin{lemma}\label{wreath property}
Let $G,H, W$ and $V$ be as in Lemma 
\ref{induced module} and adopt the notation introduced 
in its proof. In particular, $T$ denotes a transversal to $H$ in $G$ with $1\in T$.
Suppose, that the subgroup 
$A_0$ of $M_0$ satisfies $[W,A_0]\neq 0$.
Let 
$$M^*=\prod_{t\in T}M_0^t\qquad\text{and}\qquad
A^*=\prod_{t\in T}A_0^t~.$$
Suppose further, that $w=y_1\cdots y_n$ is a non-trivial reduced 
$k$-variable word of length 
$n$, and that there exists some $\ov{g}\in (N_G(A^*))^k$  
such that the groups 
$M_0^{w_{j}(\overline{g})}$ $(0\le j\le {n-1})$ 
are pairwise distinct factors of the above direct product $M^*$.

Then, for every non-trivial $z\in W$, there exist 
$u\in\langle zH\rangle$ and 
$\ov{b}\in (A^*)^k$  such that the $n+1$ elements \vspace*{-2ex}
$$(u\otimes 1) {w_{j}({\ov b}{\ov g})}\qquad (0\le j\le n)$$
are pairwise distinct elements of $V=W\otimes_H\mathbb{C}G$.
Here, the number of choices for $\ov{b}$ is at least 
$\left|A^*\right|^k/\left|A_0\right|$.
\end{lemma}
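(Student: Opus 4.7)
The plan is to exploit the imprimitive decomposition $V=\bigoplus_{t\in T}W_t$ with $W_t:=W\otimes t$ and to separate the $n+1$ elements $(u\otimes 1)w_j(\ov b\ov g)$ componentwise. First I would establish the structural observation that the $M_0^t$ $(t\in T)$ commute element-wise, being factors of a direct product; hence each $M_0^t\le C_G(M_0)\le N_G(M_0)=H$, so $A^*\le M^*\le H$. Applying the computation of Lemma \ref{lem:word_basic_0} (which remains valid once $\ov g\in N_G(A^*)^k$) then gives
$$w_j(\ov b\ov g)=c^{(j)}w_j(\ov g),\quad c^{(j)}:=a_1^{\epsilon_1}\cdots a_j^{\epsilon_j}\in A^*,\quad a_l=b_{i_l}^{\widetilde w_l(\ov g)};$$
writing $w_j(\ov g)=h_jt_j$ with $h_j\in H,\,t_j\in T$, the inclusion $c^{(j)}h_j\in H$ yields $(u\otimes 1)w_j(\ov b\ov g)=(u\cdot c^{(j)}h_j)\otimes t_j\in W_{t_j}$.

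By hypothesis the factors $M_0^{w_j(\ov g)}=M_0^{t_j}$ of $M^*$ are pairwise distinct for $0\le j\le n-1$, so the coset representatives $t_0,\ldots,t_{n-1}$ are pairwise distinct. Hence the first $n$ elements $(u\otimes 1)w_j(\ov b\ov g)$ automatically lie in distinct summands $W_{t_j}$ and are distinct for every $u\ne 0$ and every $\ov b$. Only the final element $(u\otimes 1)w_n(\ov b\ov g)$ may coincide with an earlier one. If $t_n\notin\{t_0,\ldots,t_{n-1}\}$, every $\ov b$ works and one chooses $u=z$. Otherwise $t_n=t_{k^*}$ for a unique $k^*\in\{0,\ldots,n-1\}$, and it suffices to arrange $u\cdot c^{(n)}h_n\ne u\cdot c^{(k^*)}h_{k^*}$ in $W$. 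Setting
$$s:=(c^{(k^*)})^{-1}c^{(n)}=a_{k^*+1}^{\epsilon_{k^*+1}}\cdots a_n^{\epsilon_n}\in A^*,\quad d:=h_{k^*}h_n^{-1}\in H,$$
and $K:=\ker(H\to\mathrm{GL}(W))\trianglelefteq H$, this becomes $\eta:=c^{(k^*)}(ds^{-1})(c^{(k^*)})^{-1}\notin K$, equivalently (by normality of $K$) $s\notin Kd$.

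The heart of the proof is then the counting claim that at least $|A^*|^k/|A_0|$ tuples $\ov b\in(A^*)^k$ satisfy $s(\ov b)\notin Kd$. Since $\widehat{A_0}\le\widehat{M_0}\le K$, one has $A^*\cap K=(A_0\cap K)\times\widehat{A_0}$ of size $|A_0\cap K|\cdot|\widehat{A_0}|$, with $|A_0\cap K|<|A_0|$ because $[W,A_0]\ne 0$ forces $A_0\not\le K$; thus $Kd\cap A^*$ is empty or a single coset of $A^*\cap K$ in $A^*$, of size $|A^*\cap K|$. My plan for the count is to isolate one coordinate: pick $l^*\in\{k^*+1,\ldots,n\}$, fix $b_i$ for $i\ne i_{l^*}$, and let $b_{i_{l^*}}$ range over $A^*$. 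In the generic case when the variable $x_{i_{l^*}}$ does not recur in the suffix $\{k^*+1,\ldots,n\}$, the map $b_{i_{l^*}}\mapsto s$ is a bijection of $A^*$ (a composition of left and right translations by fixed elements of $A^*$ with the automorphism induced by $\widetilde w_{l^*}(\ov g)\in N_G(A^*)$, possibly composed with inversion), so at most $|A^*\cap K|$ values are bad; multiplying by $|A^*|^{k-1}$ for the remaining coordinates gives at least $|A^*|^{k-1}|\widehat{A_0}|(|A_0|-|A_0\cap K|)\ge |A^*|^k/|A_0|$ good tuples. For each good $\ov b$, $\eta\notin K$ acts non-trivially on $W$, so $\ker(\eta-1)\subsetneq W$; since $W$ is irreducible over $\mathbb{C}H$ and $z\ne 0$, $\langle zH\rangle=W$ contains some $u$ outside this kernel. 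I expect the principal obstacle to be the non-generic situation when every variable occurring at suffix positions $k^*+1,\ldots,n$ appears at least twice in that range: then $b_{i_{l^*}}\mapsto s$ need not be bijective and a more delicate fiber analysis is required, presumably exploiting the invariance of $s\bmod K$ under multiplying $\ov b$ by tuples in $\widehat{A_0}^k\le K^k$.
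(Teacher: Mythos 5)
Your framework is essentially the paper's up to a change of base point: the block decomposition $V=\bigoplus_{t}W_t$, the observation that the points $(u\otimes 1)w_j(\ov b\ov g)$ for $0\le j\le n-1$ fall into pairwise distinct blocks so that only $w_n$ can collide with a unique earlier $w_{k^*}$, the reformulation of that collision as a coset condition $s\notin Kd$ on $s=(c^{(k^*)})^{-1}c^{(n)}\in A^*$, and the use of irreducibility of $W$ to produce $u$. But the proof is not complete, and the case you defer at the end is not a fringe case: for the commutator word $x_1^{-1}x_2^{-1}x_1x_2$ with $k^*=0$, every variable occurs twice in the suffix, so your bijectivity argument never applies. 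Moreover, the repair you sketch (invariance of $s$ modulo $K$ under multiplying $\ov b$ by tuples from $\widehat{A_0}^{\,k}$) does not work: the factors of $s$ are conjugates $b_{i_l}^{\widetilde w_l(\ov g)}$, and conjugation by $\widetilde w_l(\ov g)$ permutes the direct factors of $A^*$, so $\widehat{A_0}^{\,\widetilde w_l(\ov g)}$ in general contains $A_0$ and is not contained in $K$.

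The missing idea, which is the crux of the paper's argument, is to work componentwise in the direct product $A^*=\prod_t A_0^t$ rather than coordinatewise in $(A^*)^k$. Since $\widehat{A_0}\le K$, the condition $s\in Kd$ depends only on the $A_0$-component $s[0]$ of $s$; and each factor $\bigl(b_{i_l}^{\widetilde w_l(\ov g)}\bigr)^{\epsilon_l}$ contributes to $s[0]$ only through the single component of $b_{i_l}$ lying in the factor $A_0^{w_l(\ov g)}$ (if $\epsilon_l=-1$) or $A_0^{w_{l-1}(\ov g)}$ (if $\epsilon_l=+1$). After a Nielsen transformation one may take $y_n=x_k$ with $\epsilon_n=+1$; the last factor of $s[0]$ is then governed by the component $c:=b_k[A_0^{w_{n-1}(\ov g)}]$, and the pairwise distinctness of the factors $M_0^{w_j(\ov g)}$ $(0\le j\le n-1)$ together with reducedness of $w$ forces this component to occur in no other factor of $s[0]$ (a second occurrence could only come from position $l=n-1$ with $\epsilon_{n-1}=-1$ and $i_{n-1}=k$, i.e.\ $y_{n-1}y_n=x_k^{-1}x_k$). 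Hence, with everything else fixed, $c\mapsto s[0]$ is a bijection of $A_0$ onto a coset twisted by an automorphism $\tau$, no matter how often $x_k$ recurs; a good $c$ then exists because a coset of $\Stab_H(z)$ other than the stabilizer itself cannot contain the subgroup $A_0^\tau$ (replacing $z$ by a suitable $zr$, $r\in H$, in the degenerate case). This componentwise constraint is also what yields the stated count: only one component of one coordinate is restricted, giving at least $\left|A^*\right|/\left|A_0\right|$ choices for $b_k$ and $\left|A^*\right|^{k-1}$ for the remaining coordinates.
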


\begin{proof}
For convenience of notation we let $T=\{t_0,\ldots,t_m\}$ with $t_0=1$ and
$M_i=M_0^{t_i}$ and $A_i=A_0^{t_i}$ for all $i$.
Without loss we may assume that
$A_0^{w_j(\ov{g})}=A_j$ for $0\le j\le n-1$.
Clearly, $A_j$ acts non-trivially on the block
$W_i = W\otimes t_i$ if and only if $i=j$. 

Suppose firstly, that $A_0^{w_n(\ov g)}\neq A_j$ for $0\le j\le n-1$.
Then, for every $u\in W_1$, the vectors $(u\otimes1)w_j(\ov g)$ $(0\le j\le n)$
belong to pairwise different blocks and are, therefore, all distinct. 
In this situation any $k$-tuple $\ov{b}\in (A^*)^k$ gives rise to a sequence 
$\set{(u\otimes 1) {w_{j}({\ov b}{\ov g})}}_{j=0}^{n}$
with $n+1$ distinct elements.  Since the number of such tuples
$\ov{b}$ is $\left|{A^*}\right|^k$, the claim holds.

Suppose next, that there exists some $\ell\in\{0,\ldots n-1\}$ such that 
$$A_0^{  w_{n}(\overline{g})}=A_0^{  w_{\ell}(\overline{g})}=A_\ell\,.$$
\noindent
Modulo application of a suitable Nielsen transformation to the 
variables $x_1,\ldots,x_k$ we may assume without loss, that $y_n=x_k$  
is the last letter occurring in the word $w$.
We focus our attention on the word 
$\mu=w_\ell^{-1}w=y_{\ell+1}\cdots y_n$ 
and on its partial subwords 
$\mu_j = w_\ell^{-1}w_{\ell+j}=y_{\ell+1}\cdots y_{\ell+j}$ 
for $0\le j\le n-\ell$.

For every $j\in\{0,\ldots,n-\ell\}$ and every $\ov{b}\in {(A^*)}^k$, we
define elements $f_j(\ov{b})\in M^*$ by the equation \vspace*{-2ex}
$$ f_j(\ov{b})\mu_j(\ov{g})=\mu_j(\ov{b}\ov{g}).$$
Each $\mu_j(\ov{g})$ is in $N_G(A^*)$ and, using  Lemma 
\ref{lem:word_basic_0}, is  readily seen that each $f_j(\ov{b})$ belongs to $A^*$.
Fix any non-trivial vector $z\in W$ and consider  $z\otimes t_\ell\in W_\ell=W_1w_\ell(\ov{g})$.
Choose now $b_1, \dots, b_{k-1}$ freely in $A^*$. This choice can be done 
in $\left|A^*\right|^{k-1}$ different ways. The choice of $b_k$ will be 
described just below.  

In order to express some relations in a
clearer fashion, we will interpret elements $d\in M^*$ as functions 
$d\colon\{0,\ldots,m\}\longrightarrow M$ with values $d[i]$ $(0\le i\le m)$.
Notice that $W_\ell\mu(\ov{g})=W_\ell$. We need to choose $b_k\in A^*$ in such a way that 
$(z\otimes t_\ell)\mu(\ov{b}\ov{g})\neq z\otimes t_\ell$. 
To this end,  we choose $b_k$ under the only constraint that 
\begin{equation}\label{eq:mu_0}
(z\otimes t_\ell)\mu(\ov b\ov g) = 
(z\otimes t_\ell)f_{n-\ell-1}(\ov{b})\mu_{n-\ell-1}(\ov{g})(b_kg_k)
\not=z\otimes t_\ell.
\end{equation}
\noindent
Note that 
\begin{align*}
(z\otimes t_\ell)f_{n-\ell-1}(\ov{b})\mu_{n-\ell-1}(\ov{g})(b_kg_k) &= 
(z\otimes t_\ell)f_{n-\ell-1}(\ov{b})b_k^{\mu_{n-\ell-1}(\ov{g})^{-1}}\mu(\ov{g})\\
&=z\Big(f_{n-\ell-1}(\ov{b})b_k^{\mu_{n-\ell-1}(\ov{g})^{-1}}\Big)[\ell]\otimes t_\ell\mu(\ov{g})\\
&=v\otimes t_\ell
\end{align*}
where $v=zf_{n-\ell-1}(\ov{b})[\ell](b_k[\ell.\mu_{n-\ell-1}(\ov{g})])^{\tau}h$, 
for some automorphism $\tau$ of $A_0$ and for some 
$h\in H$ defined by $t_l\mu(\ov{g})=ht_l$.
Inequality (\ref{eq:mu_0}) holds if and only if 
\begin{equation}\label{eq:mu_1}
f_{n-\ell-1}(\ov{b})[\ell](b_k[\ell.\mu_{n-\ell-1}(\ov{g})])^{\tau}h\not\in \Stab_H(z).
\end{equation}
The calculation of $f_{n-\ell-1}(\ov{b})[\ell]$ does not need the knowledge of 
the element\linebreak
$b_k[\ell.\mu_{n-\ell-1}(\ov{g})]$, as can be checked easily 
from the identities of Lemma \ref{lem:word_basic_0} and the fact that, 
for all $0\leq i<j< n-l$ the subgroups ${A_l}^{\mu_i(\ov{g})}$ and 
${A_l}^{\mu_j(\ov{g})}$ are distinct. 
We let $s=f_{n-\ell-1}(\ov{b})[\ell]$ and show that there exists some 
element $c\in A_0$ such that 
\begin{equation}\label{eq:mu_2}
sc^{\tau}h\not\in \Stab_H(z).
\end{equation}
If $c$ has been found, then the element $b_k$ can be defined by setting 
$b_k[\ell.\mu_{n-\ell-1}(\ov{g})]=c$, while $b_k[i]$ can be chosen freely
for all $i\neq \ell.\mu_{n-\ell-1}(\ov{g})$.

If $z_1=z s\not=z h^{-1}=z_2$, then the coset
$\{y\in H\mid z_1y=z_2\}$ of $\mathrm{Stab}_H(z_1)$ is
distinct from $\mathrm{Stab}_H(z_1)$. Therefore, this coset cannot 
contain any subgroup. In particular, it cannot contain $A_0^\tau$.
In this situation, we can select $c\in A_0$ such, that the inequality  
(\ref{eq:mu_2}) holds, and the choice of $b_k$ is completed.

We are left with the situation when $z s=zh^{-1}$. 
Note that $z$ was an arbitrary non-trivial element from $W$ up to now.
Pick $r\in H$ and consider $u=zr$. If $us\not= uh^{-1}$ 
we replace $z$ by $u$ and apply the above argument with $u$ instead of $z$. 
Thus, it remains to study the case when $(z r)s= (z r)h^{-1}$ for all 
$r\in H$. 
Since $A_0$ acts non-trivially on $W=\langle zH\rangle$, there exist certain $r\in H$ and
$c\in A_0$ such that $ (z r)s c^\tau\not=(z r)s=(z r)h^{-1}$ and the choice of $b_k$ can again be completed using $u=zr$ in place of $z$.

Using the chosen element $b_k$, we now have
\begin{equation*}
(zr\otimes t_\ell)\mu(\ov{b}\ov{g})=zrsc^{\tau}h\otimes t_\ell\neq 
zr\otimes t_\ell
\end{equation*}
for suitable $r\in H$, and the elements
$(u\otimes 1)w_0(\ov{b}\ov{g}), \dots, (u\otimes 1)w_n(\ov{b}\ov{g})$
are all distinct for $u=zr$. 
Because the element $b_k$ could be chosen freely in all but one of its components, 
the possible choices for $b_k$ are at least 
$\left|A^*\right|/\left|A_0\right|$, hence the possible choices for the 
$k$-tuples $\ov b$ satisfying the requirements of the Lemma 
are at least $\left|A^*\right|^k/\left|A_0\right|$.
\end{proof}

We will also need the following fact.

\begin{lemma}\label{l:module} 
Let $G\leq \mathrm{Sym}(\Omega)$ be a finite transitive group with 
a non-trivial normal subgroup $M^*=\prod_{i=0}^m M_i$, where the subgroups 
$M_i$ are permuted transitively by $G$ under the action by conjugation. 
Suppose that $\omega\in\Omega$, and
let $R$ be a subgroup of $H=N_G(M_0)$ containing $(G_\omega\cap H)\widehat{M_0}$,
where $\widehat{M_0}=\prod_{i\not=0}M_i$. 

If $W$ is an irreducible constituent of
$\mathrm{Ind}{\stackrel{H}{_R}}(\mathbb C)$, on which $M_0$ acts 
non-trivially, then $\mathbb{C}\Omega$ has a submodule isomorphic 
to $V=\mathrm{Ind}{\stackrel{G}{_H}}(W)$.
\end{lemma}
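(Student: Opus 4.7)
The plan is to combine Frobenius reciprocity with Lemma \ref{induced module}. First, I would identify the permutation module $\C\Omega\cong\mathrm{Ind}{\stackrel{G}{_{G_\omega}}}(\C)$; restricting to $H$, the $H$-orbit of $\omega$ contributes a direct summand isomorphic to $\mathrm{Ind}{\stackrel{H}{_{G_\omega\cap H}}}(\C)$ of $\C\Omega$.

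Next, I would exploit the inclusion $G_\omega\cap H\le R$: the natural $H$-equivariant surjection of permutation modules $\C[H/(G_\omega\cap H)]\twoheadrightarrow\C[H/R]$ splits by semisimplicity in characteristic zero, so $\mathrm{Ind}{\stackrel{H}{_R}}(\C)$ embeds as an $H$-submodule of $\mathrm{Ind}{\stackrel{H}{_{G_\omega\cap H}}}(\C)\subseteq\C\Omega$. In particular, the irreducible constituent $W$ embeds into $\C\Omega$ as an $H$-submodule, and Frobenius reciprocity converts this embedding into a nonzero $G$-homomorphism $\varphi\colon V=\mathrm{Ind}{\stackrel{G}{_H}}(W)\to\C\Omega$.

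To promote $\varphi$ to an embedding of $V$ into $\C\Omega$ it suffices to show that $V$ is irreducible, since then $\ker\varphi=0$. This will follow from Lemma \ref{induced module} once both of its hypotheses are verified. The condition $[W,M_0]\ne 0$ holds by assumption, so the real task is to check $[W,\widehat{M_{0}}]=0$. Here the second part of the hypothesis on $R$ enters: since $H=N_G(M_0)$ fixes $M_0$ and therefore permutes the remaining factors $\{M_1,\dots,M_m\}$ among themselves, $\widehat{M_{0}}$ is normal in $H$; combined with $\widehat{M_{0}}\subseteq R$, this gives, for every $m\in\widehat{M_{0}}$ and every coset $hR\in H/R$, the identity $m\cdot hR=h(h^{-1}mh)R=hR$. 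Hence $\widehat{M_{0}}$ acts trivially on $\mathrm{Ind}{\stackrel{H}{_R}}(\C)$ and, in particular, on $W$.

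The main subtlety lies in recognizing the two distinct roles played by the hypotheses on $R$: the inclusion $G_\omega\cap H\le R$ is what places $W$ inside $\C\Omega$ as an $H$-submodule, whereas $\widehat{M_{0}}\subseteq R$ is exactly what forces $\widehat{M_{0}}$ to act trivially on $W$, unlocking Lemma \ref{induced module} and thereby the irreducibility of $V$ needed to conclude that $\varphi$ is injective.
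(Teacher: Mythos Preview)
Your proposal is correct and follows essentially the same approach as the paper: both establish irreducibility of $V$ via Lemma~\ref{induced module} and produce a nonzero $G$-homomorphism between $V$ and $\mathbb{C}\Omega$ by combining Frobenius reciprocity with the fact that $\mathrm{Ind}{\stackrel{H}{_{H_\omega}}}(\mathbb{C})$ sits inside $\mathbb{C}\Omega|_H$. The paper phrases the latter step through Mackey's theorem while you argue it directly from the $H$-orbit of $\omega$, and you also spell out explicitly why $\widehat{M_0}$ acts trivially on $W$, a point the paper leaves implicit when invoking Lemma~\ref{induced module}.
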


\begin{proof}
The module 
$V=\mathrm{Ind}{\stackrel{G}{_H}}(W)$ is irreducible
by Lemma \ref{induced module}.
Therefore it suffices to show, that
$\mathrm{Hom}_G(\mathbb{C}\Omega, V)\not=0$. 
Frobenius reciprocity(\cite[Theorem 10.8]{CurtisReiner}) gives
$$
\mathrm{Hom}_G(\mathbb{C}\Omega, V)\ \simeq\
\mathrm{Hom}_{G_\omega}(\mathbb{C}, V)
\ \simeq\ C_V(G_\omega)\ \simeq\ \mathrm{Hom}_{G_\omega}(V,\mathbb{C}).
$$
In particular, we only need to show that $\mathrm{Hom}_{G_\omega}(V,\C)\not=0$.

The $G$-module $V$ is itself an induced module. Therefore, by 
application of Mackey's theorem (\cite[Theorem 10.13]{CurtisReiner}), 
the $G_\omega$-module $V$ contains   
$\mathrm{Ind}{\stackrel{G_\omega}{_{H_\omega}}}(W)$ as a direct summand. 
Hence it suffices to show, that 
$\mathrm{Hom}_{G_\omega}(\mathrm{Ind}{\stackrel{G_\omega}{_{H_\omega}}}(W),\C)\neq0$.

Note that $H_\omega\leq R$, and that the $H$-module $W$ is a submodule of 
$\mathrm{Ind}{\stackrel{H}{_R}}(\mathbb C)$ by hypothesis.
Together with two further applications of Frobenius reciprocity, this implies
\begin{eqnarray*}
\mathrm{Hom}_{G_\omega}(\mathrm{Ind}{\stackrel{G_\omega}{_{H_\omega}}}(W),\C)
&\!\simeq\!& \mathrm{Hom}_{H_\omega}(W, \mathbb C)\\ 
&\!\ge\!&
\mathrm{Hom}_{R}(W, \mathbb C)\ \simeq\
\mathrm{Hom}_{H}(W,\mathrm{Ind}{\stackrel{H}{_R}}(\mathbb C))\ \neq\ 0.
\end{eqnarray*}
\end{proof}

\begin{lemma}\label{projec}
Let $G\leq \mathrm{Sym}(\Omega)$ be a finite transitive group and 
$\omega$ any point of $\Omega$. If $V$ is a non-trivial $G$-submodule of 
$\mathbb{C}\Omega$ and $\pi$ is the projection from $\mathbb{C}\Omega$ onto $V$, 
then $(\omega)\pi\neq 0$.
\end{lemma}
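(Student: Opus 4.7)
The plan is to observe that $\pi$ is a $G$-module homomorphism, and then to combine this with the transitivity of the action. First I would invoke Maschke's theorem: since $\mathbb{C}G$ is semisimple, the submodule $V$ admits a $G$-invariant complement $V'$ in $\mathbb{C}\Omega$, and the projection $\pi$ along the decomposition $\mathbb{C}\Omega=V\oplus V'$ is automatically $\mathbb{C}G$-linear.

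Then I would argue by contradiction, assuming $(\omega)\pi=0$. Since $G$ is transitive on $\Omega$, every $\omega'\in\Omega$ has the form $\omega g$ for some $g\in G$. The $G$-equivariance of $\pi$ yields
$$(\omega')\pi=(\omega g)\pi=\bigl((\omega)\pi\bigr)g=0.$$
Thus $\pi$ annihilates the entire basis $\Omega$ of $\mathbb{C}\Omega$, so $\pi\equiv 0$ and therefore $V=0$, contradicting the assumption that $V$ is a non-trivial submodule.

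There is essentially no obstacle: the only subtlety is recognising that the word ``projection'' in the statement must refer to the $G$-equivariant projection (the one supplied by Maschke), and once that is acknowledged, the proof reduces to the one-line transport of the hypothetical zero at $\omega$ to every other point of $\Omega$ via the group action.
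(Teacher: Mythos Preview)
Your proof is correct and follows essentially the same approach as the paper: write $\mathbb{C}\Omega=V\oplus U$ with $U$ a $G$-invariant complement (Maschke), observe that $(\omega)\pi=0$ means $\omega\in U$, and then use transitivity to conclude $\Omega\subseteq U$, whence $V=0$. The only cosmetic difference is that the paper phrases the key step as ``$\omega\in U$ implies $\omega g\in U$'' rather than via $G$-equivariance of $\pi$, but these are the same observation.
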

\begin{proof}
Write $\mathbb{C}\Omega=V\oplus U$ with $U$ $G$-invariant.
If $(\omega)\pi = 0$ then $\omega \in U$ and therefore $\omega g\in U$ for every $g\in G$.
Hence $\Omega\subseteq U$ because $G$ is transitive on $\Omega$. It follows that 
$U=\mathbb{C}\Omega$ and  $V=0$, a contradiction. 
\end{proof}

%%%%%%%%%%%%%%%% Section 3 %%%%%%%%%%%%%%%%%%%%%%

\section{$m$-rarefied groups and strategy of the proof}
\label{sec:strategy}
For every finite group $G$ we define  
the following characteristic subgroups:
\begin{align*}
R(G)&=\seq{A\mid A \textrm{ is a  normal solvable  subgroup of } G},\\
S(G)&=\seq{B\mid B \textrm{ is a minimal normal non-abelian 
subgroup of } G}.
\end{align*}
The \emph{$RS$-series} of $G$ is then defined recursively by
$$\begin{cases} R_1(G)=R(G) & \\
    S_i(G)/R_i(G)=S(G/R_i(G)), & \textrm{for } i\geq 1\\     
    R_{i+1}(G)/S_i(G)= R(G/S_i(G)), & \textrm{for } i\geq 1.
   \end{cases} $$
The series thus defined always reaches $G$. If $G=R_{m+1}(G)>R_m(G)$, 
then $m$ is said to be \emph{the nonsolvable length} of $G$ and we write 
$\lambda(G)=m$.\vskip1ex 

By \cite[Theorem 1.1]{fuma}, every group $G$ of nonsolvable length $m$ has 
an $m$-\emph{rarefied} subgroup, 
that is, a subgroup $H$ of nonsolvable length $m$ 
satisfying the following conditions:
\begin{enumerate}
\item[(i)~] 
$R_1(H)=\Phi(H)$ and $R_{i+1}(H)/S_i(H)=\Phi(H/S_i(H))$ for all 
$i=1, \dots, m-1$;\vspace*{.5ex}
\item[(ii)~] 
$S_i(H)/R_i(H)$ is the unique  minimal normal subgroup 
of $H/R_i(H)$  
for\\ $1\le i \le m$;\vspace*{.5ex}
\item[(iii)~] 
the simple components of $S_i(H)/R_i(H)$ are isomorphic to groups 
in the set\\[.5ex] 
$\mathcal{L}\,=~\big\{\PSL_2(2^r),\, \PSL_2(3^r),\, \PSL_2(p^{2^a}),\, 
\PSL_3(3),\, \null^2B_2(2^r)~\,\big\vert~\,
 p,r\textrm{ odd primes}, \, 
 a\in \mathbb N\big\}
 $\\[.5ex]
for $1\le i \le m$.\vspace*{1ex}
\end{enumerate}

\begin{lemma}\label{dihedral}
Let $S$ be a simple group in $\mathcal L$. 
Then $S$ contains an $\Aut{S}$-invariant  
$S$-conjugacy class of subgroups, which are dihedral of 
order $2p$ for some odd prime $p$. 
\end{lemma}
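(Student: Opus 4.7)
The plan is to prove the lemma by a case-by-case analysis of the five families comprising $\mathcal{L}$. In each case, I would exhibit an odd prime $p$ and a dihedral subgroup $D \leq S$ of order $2p$, and then verify that the $S$-conjugacy class of $D$ is (i) a single class and (ii) stable under $\Aut{S}$; note that (ii) follows automatically from (i).

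For the three $\PSL_2(q)$-families ($q \in \{2^r, 3^r, p^{2^a}\}$), I would use a uniform argument. Set $d = \gcd(2, q-1)$. The standing hypotheses force $q \geq 5$ (the degenerate case $\PSL_2(3) = \Alt{4}$ being non-simple and therefore excluded), so at least one of $(q-1)/d$, $(q+1)/d$ has an odd prime divisor $\ell$; for instance, when $q = 2^r$ one may take $\ell = 3$, since $r$ odd implies $3 \mid 2^r + 1$. A Sylow $\ell$-subgroup $P$ of $S$ is then cyclic of order $\ell$ and lies in a unique maximal torus $T$; the normaliser $N_S(T)$ is dihedral of order $2(q \mp 1)/d$. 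For any involution $w \in N_S(T) \setminus T$, the subgroup $D = \seq{P, w}$ is dihedral of order $2\ell$, and a direct computation inside $N_S(T)$ shows that all such $D$'s form a single $S$-conjugacy class, hence an $\Aut{S}$-orbit. The same argument handles $S = \null^2B_2(2^r)$, where Suzuki's classification of the maximal subgroups of $\null^2B_2(q)$ includes a dihedral normaliser $N_S(T_0) \cong D_{2(q-1)}$ of a split maximal torus, and $r \geq 3$ ensures that $q - 1 = 2^r - 1$ has an odd prime divisor $\ell$.

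The group $S = \PSL_3(3)$ requires separate treatment: the natural candidate $p = 13$ fails, because $N_S(C_{13})$ is the Frobenius group $13{:}3$ and contains no involution, so $S$ admits no dihedral subgroup of order $26$. I would instead take $p = 3$. A copy of $D_6 \leq \PSL_3(3)$ can be exhibited concretely as $\seq{x, t}$, where $x \in \SL{3}{3}$ is a subregular unipotent element (Jordan type $J_2 + J_1$) and $t = \mathrm{diag}(1, 2, 2) \in \SL{3}{3}$ is a diagonal involution inverting $x$ (a short direct calculation confirms $txt^{-1} = x^{-1}$). I would then verify that all $D_6$-subgroups attached in this way to the $S$-class of subregular unipotents form a single $S$-conjugacy class, and that this class is preserved by the graph automorphism $g \mapsto (g^{\top})^{-1}$, which stabilises Jordan type.

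The main obstacle is the $\PSL_3(3)$ case: $\PSL_3(3)$ has two conjugacy classes of order-$3$ elements (the regular and subregular unipotents), and one must check both that the graph automorphism preserves each of these classes (rather than swapping them) and that, with the class of $C_3$ fixed, only one $S$-class of dihedral $D_6$'s arises. The other cases reduce routinely to standard facts about the torus-and-Weyl-group structure of rank-one groups of Lie type.
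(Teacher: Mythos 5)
Your treatment of the four Lie-type families is essentially the paper's own argument (a dihedral torus normalizer of order $2m$ with $m$ odd, then the subgroup of order $2\ell$ over an odd prime $\ell\mid m$), and it is correct apart from the harmless slip that for $\PSL_2(8)$ the Sylow $3$-subgroup has order $9$, not $3$ (take the order-$\ell$ subgroup of the cyclic torus rather than a full Sylow subgroup). Your diagnosis of $\PSL_3(3)$ is also correct and worth emphasizing: $N_S(P_{13})\cong 13{:}3$ has odd order $39$, so $\PSL_3(3)$ contains no dihedral subgroup of order $26$ at all --- and this is exactly the (erroneous) family of subgroups that the paper's own proof invokes for this case. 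So here you have not merely chosen a different route; you have found a genuine error in the published argument.

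Unfortunately your repair for $\PSL_3(3)$ has a gap of its own, and your chosen class in fact does not work. Take $x=I+E_{12}$ and the two inverting involutions $t=\mathrm{diag}(1,2,2)$ and $t'=\mathrm{diag}(2,1,2)$. Both $\langle x,t\rangle$ and $\langle x,t'\rangle$ are copies of $D_6$ built on the subregular class, yet they are not conjugate even in $\mathrm{GL}_3(3)$: restricted to either subgroup the natural module splits as (uniserial $2$-dimensional summand $\langle e_1,e_2\rangle$) $\oplus$ (sign), and the socle $\langle e_1\rangle$ of the $2$-dimensional summand carries the trivial module for $\langle x,t\rangle$ but the sign module for $\langle x,t'\rangle$; since every automorphism of $S_3$ is inner, non-isomorphic restrictions preclude conjugacy. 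So your claim that only one $S$-class arises is false. Worse, the inverse--transpose automorphism replaces the natural module by its dual, which interchanges the two socle types and hence swaps these two classes: the $S$-class of your specific $D$ is not $\Aut{S}$-invariant. The lemma is nevertheless true for $\PSL_3(3)$ with $p=3$, but you must use the \emph{regular} unipotent class instead: if $x$ is unipotent with a single Jordan block, then $C_S(x)\cong C_3\times C_3$ has odd order, $N_S(\langle x\rangle)$ has order $18$, and a direct computation shows it contains exactly three involutions, namely $t_0$, $t_0x$, $t_0x^2$ for any one of them $t_0$; hence $\langle x\rangle$ lies in a \emph{unique} dihedral subgroup of order $6$. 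This uniqueness makes the resulting family a single $S$-class and renders it $\Aut{S}$-invariant, since $\Aut{S}$ preserves the regular unipotent class. With that substitution your case analysis goes through.
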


\begin{proof}
We first claim that $S$ contains a conjugacy class of 
(maximal) subgroups that are dihedral of 
order $2m$, for some odd integer $m$.

When the characteristic of $S$ is $2$, choose the 
conjugacy class of normalizers of  
split-torii subgroups (diagonal subgroups) 
of order $q-1$. Each of these subgroups is
a dihedral group of order $2(q-1)$ 
(see \cite[Theorems 6.5.1 and 6.5.4]{GLS3}).

When $S\simeq \PSL_2(q)$,  with $q=3^r$ or 
$q=p^{2^a}$ ($p,r$ odd primes and $a\geq 0$), 
then $S$ contains two conjugacy classes of 
maximal subgroups, which are dihedral subgroups 
of orders respectively $q-1$ and $q+1$. 
Each of these subgroups is the normalizer 
respectively of a split torus (diagonal 
subgroup of $S$) or of a non-split torus.  
According to the congruence class of 
$q$ modulo $4$, take the class of subgroups 
having order $2m$, with $m$ odd 
(see \cite[Theorem 6.5.1]{GLS3}).

When $S=L_3(3)$ the normalizers of the Sylow 
$13$-subgroups form a conjugacy class of 
dihedral groups of order $26$ 
(see \cite{atlas}).

Finally, consider one of the dihedral groups $E$ in the conjugacy class just established.
If $|E|=2m$, then choose a prime $p$ dividing $m$. 
The group $E$ has a unique 
subgroup $C$ of order $p$. And if $a$ is any involution in $E$, then 
$D=C\langle a\rangle$ is a dihedral group of order $2p$. 
Since all the involutions of $E$ are conjugate in $E$, any 
other subgroup of $E$ of order $2p$ is conjugate to $D$.
The set $\{D^g\mid g\in S\}$ is then a family of subgroups 
with the required properties.
\end{proof}

Theorem C will be proved by contradiction. 
So assume, that there exists a group $G$ with nonsolvable length $n$,
such that the conclusion of Theorem C is not valid 
with respect to the action of $G$ on a certain faithful transitive $G$-set $\Omega$. 
Among all possible such counterexamples, we select a pair  $(G,\Omega)$ 
such that $n+\abs{G}+\abs{\Omega}$ is minimal.  

Consider an $n$-rarefied subgroup $H$ of $G$. 
From \cite[Proposition 4.7]{fuma}, we obtain $\lambda (H/C_H(\Gamma))=n$ 
for at least one $H$-orbit $\Gamma$ in $\Omega$.
Hence, by minimal choice of $(G,\Omega)$,  we have  
$G=H$, that is, $G$ is an $n$-rarefied group. 
By \cite[Proposition 4.2]{fuma}, 
every proper subgroup $U$ of $G$ satisfies $\lambda(U)\le n$.
Hence minimality of $(G,\Omega)$ yields, that
every proper subgroup of $G$ has nonsolvable length strictly 
smaller than $n$.\vskip1ex

%%%%%%%%%%%%%%%% Section 4 %%%%%%%%%%%%%%%%%%%%%%

\section{Proof of Theorem C: reduction to the case $\Phi(G)=1$.}
\label{sec:Frattini}
{\bf In this section, $\mathbf{(G,\Omega)}$ is always a minimal 
counterexample to
Theorem C, as above.}
$F=\Phi(G)$ denotes the Frattini subgroup of $G$, and $L$ denotes 
the last term of the derived series of $S_1(G)$. 
Since $G$ is $n$-rarefied, $S_1(G)/F$ is the unique minimal normal
subgroup in $G/F$. In particular, $S_1(G)=FL$. 
\begin{lemma}\label{lem_0}
Assume that $K$ is a normal subgroup of $G$ not containing $L$. 
Then $K\leq F$ and $G/K$ is $n$-rarefied.
\end{lemma}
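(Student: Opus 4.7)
\textbf{Step 1 (show $K\leq F$).} I would argue by contradiction: suppose $K\not\leq F$, and derive $L\leq K$, contradicting the hypothesis. Since $G$ is $n$-rarefied, $S_1(G)/F$ is the \emph{unique} minimal normal subgroup of $G/F$. The image $KF/F$ is a non-trivial normal subgroup of $G/F$, so it must contain $S_1(G)/F$; equivalently $S_1(G)\leq KF$, and in particular $L\leq KF$. Now consider the map $L\longrightarrow KF/K\cong F/(K\cap F)$. Because $F=\Phi(G)$ is nilpotent, the quotient $KF/K$ is solvable. On the other hand $L=S_1(G)^{(\infty)}$ is perfect by construction, so its image in this solvable group is trivial; hence $L\leq K$, contradicting the hypothesis. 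Therefore $K\leq F$.

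\textbf{Step 2 (show $G/K$ is $n$-rarefied).} Once $K\leq\Phi(G)$, the standard Frattini fact $\Phi(G/K)=\Phi(G)/K$ becomes available (a maximal subgroup of $G/K$ is the image of a maximal subgroup of $G$, which must contain $\Phi(G)\geq K$). The plan is to prove by induction on $i$ that
\[
R_i(G/K)=R_i(G)/K\qquad\text{and}\qquad S_i(G/K)=S_i(G)/K,
\]
using the correspondence theorem together with the observation that both the solvable radical and the socle of non-abelian type are preserved under quotienting by $K\leq F\leq R_i(G)$. With these identifications the three defining properties of an $m$-rarefied group transfer automatically: condition (i) at level $1$ reads $R_1(G/K)=F/K=\Phi(G)/K=\Phi(G/K)$, and at higher levels it follows from $(G/K)/S_i(G/K)\cong G/S_i(G)$ together with the analogous property for $G$; condition (ii) survives because the unique-minimal-normal-subgroup property in $G/R_i(G)$ is inherited through the quotient $(G/K)/R_i(G/K)\cong G/R_i(G)$; and condition (iii) is immediate since the simple factors of $S_i(G/K)/R_i(G/K)$ and of $S_i(G)/R_i(G)$ coincide and already lie in $\mathcal L$. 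Finally $\lambda(G/K)=n$ follows from $R_n(G)<G$ and $K\leq F\leq R_n(G)$, which give $R_n(G/K)=R_n(G)/K<G/K=R_{n+1}(G)/K=R_{n+1}(G/K)$.

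\textbf{Main obstacle.} The only non-formal point is Step 1: the contrapositive of what we want. The key leverage is the combination of two opposite features of the rarefied structure, namely that $L$ is \emph{perfect} (being the last derived term of $S_1(G)$) while the Frattini subgroup $F$ is \emph{nilpotent}. The uniqueness of the minimal normal subgroup modulo $F$ forces $L\leq KF$, and then the perfect/solvable clash forces $L\leq K$. Step 2 is then essentially bookkeeping through the $RS$-series, made clean by the identity $\Phi(G/K)=\Phi(G)/K$ valid for $K\leq\Phi(G)$.
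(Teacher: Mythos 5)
Your proof is correct, and Step 1 takes a genuinely different (and cleaner) route than the paper's. The paper argues with $K\cap L$ rather than $K$: it first supposes $K\cap L\not\leq F$, deduces $S_1(G)=(K\cap L)F$ and hence $L=(L\cap F)(K\cap L)$, and then iterates $L=L'\leq (L\cap F)'(K\cap L)$ using the nilpotency of $L\cap F$ to conclude $L=K\cap L$, a contradiction. But this only yields $K\cap L\leq F$, so the paper needs a second step: $[K,L]\leq K\cap L\leq F$ shows $KF/F$ centralizes $S_1(G)/F$, and an external fact (that $S_1(G)/F$ has trivial centralizer in $G/F$, cited from the rarefied-groups paper) then forces $K\leq F$. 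You bypass both the iteration and the centralizer lemma: assuming $K\not\leq F$ directly, the uniqueness of the minimal normal subgroup gives $L\leq S_1(G)\leq KF$, and the perfect group $L$ maps into the nilpotent quotient $KF/K\cong F/(F\cap K)$, so $L\leq K$ at once. Same ingredients (uniqueness of $S_1(G)/F$, perfectness of $L$, nilpotency of $F$), but your packaging is more self-contained. For Step 2 the paper simply cites Proposition 4.2 and Lemma 4.3 of the rarefied-groups paper, whereas you sketch the verification directly via $\Phi(G/K)=\Phi(G)/K$ and the inductive identification of the $RS$-series of $G/K$ with that of $G$ modulo $K$; your sketch is sound, the only point worth stating explicitly being that $K\leq R_1(G)$ (which holds since $K$ is normal and nilpotent), so that the correspondence $R_i(G/K)=R_i(G)/K$ gets started.
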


\begin{proof} 
Suppose that $K\cap L$ is not contained in $F$.
Then $(K\cap L)F/F$ is a non-trivial normal subgroup of $G/F$ 
and therefore it contains $S_1(G)/F$, the unique minimal normal 
subgroup of $G/F$. Thus
$S_1(G)=(K\cap L)F$ and 
$$L=L\cap S_1(G)=L\cap (K\cap L)F=(L\cap F)(K\cap L).$$
Since $L$ is perfect, we get $L=L'\leq(L\cap F)'(K\cap L)$. 
Because $L\cap F$ is nilpotent, iteration of this argument finally yields 
$L=K\cap L$, that is, $L\leq K$. But this contradicts the hypothesis of 
the Lemma.
Hence $K\cap L\leq F$, and this implies that $KF/F$ centralizes 
$LF/F=S_1(G)/F$. 
On the other hand $S_1(G)/F$ has trivial centralizer in $G/F$ 
(by \cite[Lemma 2.4]{fuma}). It follows that $K\leq F$. Hence $G/K$ 
is $n$-rarefied by 
\cite[Proposition 4.2 and Lemma 4.3]{fuma}. 
\end{proof} 

\begin{lemma}\label{lem_1}
If $G_{\omega}<H\leq G$, for some $\omega\in \Omega$, then $L\leq H_G$. 
Moreover $L$ lies in the normalizer of any block system 
(with respect to the action of $G$ on $\Omega$).
\end{lemma}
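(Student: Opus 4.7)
I will prove (1) by contradiction, combining Lemma~\ref{lem_0} with the minimality of $(G,\Omega)$. Suppose $L\not\leq H_G$. Since $H_G\trianglelefteq G$, Lemma~\ref{lem_0} gives $H_G\leq F$ and asserts that $\bar G:=G/H_G$ is again $n$-rarefied; in particular $\lambda(\bar G)=n$. The subgroup $H$ determines a $G$-invariant block system: let $\Delta:=\omega H$ and $\bar\Omega:=\{\Delta g\mid g\in G\}$. The kernel of the induced action of $G$ on $\bar\Omega$ is precisely $H_G$, so $\bar G$ acts faithfully and transitively on $\bar\Omega$, with $|\bar\Omega|=[G:H]<[G:G_\omega]=|\Omega|$.

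Since $\lambda(\bar G)=n$, $|\bar G|\leq|G|$ and $|\bar\Omega|<|\Omega|$, the pair $(\bar G,\bar\Omega)$ is strictly smaller than the chosen minimal counterexample with respect to the parameter $n+|G|+|\Omega|$. Hence Theorem~C holds for $(\bar G,\bar\Omega)$: for any reduced word $w$ of length $n$, there exist a Sylow $2$-subgroup $\tilde P\leq\bar G$ and a tuple $\bar{\tilde g}\in\tilde P^k$ such that the blocks $\Delta\,w_j(\bar{\tilde g})$, $0\leq j\leq n$, are pairwise distinct.

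The key remaining step is to lift this witness from $\bar G$ back to $G$. I would choose a Sylow $2$-subgroup $P$ of $G$ with $PH_G/H_G=\tilde P$ (such $P$ exists because Sylow $2$-subgroups of $G$ surject onto Sylow $2$-subgroups of $\bar G$) and lift each component of $\bar{\tilde g}$ to an element of $P$, producing $\bar g\in P^k$. Then for each $j$, the point $\omega\,w_j(\bar g)$ lies in the block $\Delta\,w_j(\bar{\tilde g})$; since these blocks are pairwise distinct, the points $\omega\,w_j(\bar g)$ are pairwise distinct in $\Omega$. This establishes the conclusion of Theorem~C at $\omega$ for every word of length $n$, which by transitivity of $G$ on $\Omega$ extends to every starting point, contradicting that $(G,\Omega)$ is a counterexample. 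Therefore $L\leq H_G$, proving~(1).

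Assertion~(2) follows formally from (1). Given any $G$-invariant block system $\mathcal B$ on $\Omega$, pick a block $\Delta\in\mathcal B$ and $\omega\in\Delta$, and set $H:=\Stab_G(\Delta)$; clearly $G_\omega\leq H$. If $|\Delta|=1$ the statement is trivial, and otherwise the transitivity of $G$ yields an element of $H$ moving $\omega$ within $\Delta$, so $G_\omega<H$ and part~(1) applies. Since the kernel of the action of $G$ on $\mathcal B$ (i.e.\ the normalizer of the block system) coincides with $H_G$, we conclude $L\leq H_G$, as required. The only delicate technical point in this argument is the Sylow-$2$ lifting, which is standard but indispensable for preserving the Sylow-$2$ refinement of Theorem~C when passing to the quotient $\bar G$.
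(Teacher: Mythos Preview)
Your argument is correct and, for part (1), matches the paper's proof essentially verbatim: pass to the action of $G$ on the cosets of $H$, invoke Lemma~\ref{lem_0} and minimality of $(G,\Omega)$, then lift the Sylow-$2$ witness back to $G$. For part (2) you reduce directly to (1) by taking $H=\Stab_G(\Delta)$, which is a little cleaner than the paper's route (the paper works instead with $KG_\omega$, where $K$ is the kernel of the action on $\mathcal B$, and treats the case $K=1$ separately); one small slip is that the case $|\Delta|=1$ is not ``trivial''---the normalizer of the singleton partition is $1\not\geq L$---so the lemma is tacitly about systems with $|\mathcal B|<|\Omega|$, an assumption the paper's own proof also makes.
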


\begin{proof}
Let $\mathcal{H}=\{Hg\mid g\in G\}$. The translation action of $G$ on 
$\mathcal{H}$ has kernel $H_G$, and 
$\left|\mathcal{H}\right|<\left|\Omega\right|$. 
Assume, that $H_G$ does not contain 
$L$. Then, by Lemma \ref{lem_0}, $G/H_G$ is $n$-rarefied. And since 
$\left|G/H_G\right|+\left|\mathcal{H}\right|<
\left|G\right|+\left|\Omega\right|$, 
the group $G/H_G$ satisfies Theorem C in its action on $\mathcal{H}$. 
For each $k$-variable word $w$  of length $n$ and for every $\alpha\in 
\mathcal{H}$, 
we can thus find a Sylow 2-subgroup $Q/H_G$ and 
$\ov{y} \in (Q/H_G)^k$ 
such that $\big|\{\alpha w_i(\ov{y})\mid 0\le i\le n\}\big|=n+1$. 
Choose $\alpha=H$. 
Note that $Q/H_G=PH_G/H_G$ for some Sylow 2-subgroup $P$ in $G$. 
Let $\ov{x}\in P^k$ be a preimage to $\ov y$. 
Since the points $\alpha w_i(\ov{y})$ $(0\le i\le n)$ are pairwise 
distinct, the products $w_i(\ov{x}) w_j(\ov{x})^{-1}$ $(0\le i<j\le n)$ 
do not lie in $H$ and not in $G_\omega$. 
But then the points $\omega w_i(\ov{x})$ $(0\le i\le n)$
are pairwise distinct too, showing that $G$ satisfies Theorem C, 
a contradiction. 

Let $K$ be the normalizer in $G$ of the block system $\mathcal{B}$.
Note that $KG_\omega$ is a subgroup. 
Assume, that $K=1$. So $G$ acts 
faithfully on $\mathcal{B}$ and, since 
$\left|\mathcal{B}\right|<\left|\Omega\right|$,
we have that $G$ satisfies the conclusion of Theorem C in its action on 
$\mathcal{B}$. It follows, that 
$G$ satisfies the conclusion of Theorem C in its action on $\Omega$, 
a contradiction.
Now $K\neq 1$, whence $KG_\omega>G_\omega$.
We conclude, that $L$ is contained in the 
core of $KG_\omega$, which equals $K$.
\end{proof}

\begin{prop}\label{frattini} The Frattini subgroup $F$ of $G$ is a 
$p$-group for some prime $p$, and the subgroup $A=F\cap L$ is abelian.
\end{prop}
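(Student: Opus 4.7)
The plan is to derive both assertions by contradiction, using the minimality of $(G,\Omega)$ together with Lemma~\ref{lem_0}, which says that any normal subgroup of $G$ not containing $L$ is automatically inside $F$ and yields an $n$-rarefied quotient.

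For part (1), suppose $F$ is not a $p$-group. Since $F$ is nilpotent, there is a prime $p$ for which $N:=O_p(F)$ is a proper non-trivial subgroup of $F$, and by choosing $p$ appropriately from among the primes dividing $|F|$ I can arrange that either $N$ has odd order or $p=2$ (so that Sylow $2$-subgroups of $G$ correspond cleanly to those of $G/N$). Then $N\triangleleft G$, and $N\not\supseteq L$ because $L$ is non-solvable; by Lemma~\ref{lem_0} the quotient $G/N$ is $n$-rarefied of strictly smaller order than $G$. Consider the coset space $\Omega':=G/(G_\omega N)$: faithfulness of the $G$-action on $\Omega$ forces $N\not\le G_\omega$, hence $|\Omega'|<|\Omega|$. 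Let $C=(G_\omega N)_G$ be the kernel of the $G$-action on $\Omega'$; then $N\le C$, and by Lemma~\ref{lem_0} either $C\le F$ or $L\le C$. In the first sub-case, $G/C$ is $n$-rarefied, acts faithfully transitively on $\Omega'$, and satisfies $|G/C|+|\Omega'|<|G|+|\Omega|$, so by minimality Theorem~C holds for $(G/C,\Omega')$. Lifting a resulting tuple $\overline y$ in a Sylow $2$-subgroup of $G/C$ to a tuple $\overline x$ in a Sylow $2$-subgroup $P$ of $G$ (possible because of the chosen parity of $N$), the $n+1$ points $\omega w_i(\overline x)$ project to the distinct points $\omega' w_i(\overline y)$ in $\Omega'$, so they lie in pairwise disjoint fibres of $\Omega\to\Omega'$ and are therefore pairwise distinct in $\Omega$, contradicting the counterexample property.

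For part (2), the very same scheme applies with $N:=[A,A]$, which is characteristic in $A$ hence normal in $G$, contained in $F$, and cannot contain $L$; Lemma~\ref{lem_0} gives that $G/[A,A]$ is $n$-rarefied, and if $[A,A]\ne 1$ then the argument of part (1) produces a contradiction, forcing $A$ to be abelian.

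The main obstacle is the remaining sub-case $L\le C$ (equivalently $L\le G_\omega N$): here $G/C$ has nonsolvable length strictly less than $n$, so direct application of the minimality hypothesis to $(G/C,\Omega')$ only provides separation for words of length $<n$. To handle this, I would apply the minimality hypothesis to the length-$(n-1)$ prefix $w_{n-1}$ of $w$ in order to produce a lift $\overline x\in P^k$ separating the first $n$ points $\omega w_0(\overline x),\dots,\omega w_{n-1}(\overline x)$ in $\Omega$, and then perturb $\overline x$ within $N^k$ to separate the final point $\omega w(\overline x)$ from any earlier point with which it may collide inside the same $N$-fibre. Lemma~\ref{lem:word_basic_0} describes exactly how $w(\overline b\overline x)$ differs from $w(\overline x)$ modulo $N$, and an argument in the spirit of Lemma~\ref{wreath property}---adapted to the normal $p$-subgroup $N$ acting on a fibre of $\Omega\to\Omega'$, rather than to a direct product of conjugate simple subgroups acting on an induced module---is expected to produce the required separating perturbation $\overline b\in N^k$ inside the chosen Sylow $2$-subgroup of $G$, completing the contradiction in both parts.
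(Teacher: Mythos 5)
There is a genuine gap, and it sits exactly where you locate your ``main obstacle''. Your plan rests on a dichotomy for the core $C=(G_\omega N)_G$: either $C\le F$ (in which case you lift a separating tuple from $G/C$) or $L\le C$ (which you defer). But the first alternative never occurs: since $N$ is a non-trivial normal subgroup and $G_\omega$ is core-free, we have $G_\omega<G_\omega N$, and Lemma~\ref{lem_1} (already available at this point) forces $L\le (G_\omega N)_G=C$. So the entire weight of your argument falls on the deferred case, where $\lambda(G/C)<n$ and minimality gives you nothing directly. Your proposed repair --- separate the first $n$ points using the prefix $w_{n-1}$ and then ``perturb within $N^k$'' in the spirit of Lemma~\ref{wreath property} --- is only a hope, not an argument: Lemma~\ref{wreath property} is engineered for a direct product of conjugate non-abelian factors acting on an induced module with the block structure supplied by Lemmas~\ref{induced module} and~\ref{l:module}, and no analogue for a nilpotent normal $p$-subgroup acting on a fibre is established (or obviously true). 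Tellingly, if your scheme worked for an arbitrary non-trivial normal $N\le F$ it would prove $F=1$ outright, i.e.\ Proposition~\ref{trivial frattini} --- which the paper only obtains afterwards, using the conclusions of Proposition~\ref{frattini} together with Lemma~\ref{dihedral} and the full module machinery. The same objection applies verbatim to your treatment of $N=[A,A]$ in part (2).

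The paper's own proof is, by contrast, purely group-theoretic and much more elementary: it first shows $C_F(L)=1$ (via the block system of $C_F(L)$-orbits and perfectness of $L$), so $A=F\cap L\ne1$; then, for two distinct primes $p,q$ dividing $|A|$ with Sylow subgroups $P,Q\trianglelefteq G$, Lemma~\ref{lem_1} applied to $QG_\omega$ gives $L\le QG_\omega$, and Dedekind's law yields $A=Q(A\cap G_\omega)$, hence $P\le G_\omega$, contradicting core-freeness; the same trick shows $F$ has no $q$-part for $q\ne p$. Commutativity of $A$ is obtained by exhibiting, for each $\omega$, a unique minimal $G_\omega$-invariant subgroup $B(\omega)$ with $F_\omega<B(\omega)\le F$, showing $B(\omega)=AF_\omega$ and $B(\omega)'\le F_\omega$, and intersecting over $\omega$. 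You should look for arguments of this kind --- repeated use of Lemma~\ref{lem_1} plus the modular law --- rather than re-running the counterexample machinery inside this proposition.
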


\begin{proof} 
We only need to consider the case when $F\not=1$. 
Let us show firstly, that the centralizer $C_F(L)$ is trivial.

To this end, assume that $D=C_F(L)\neq1$. 
Let $K$ denote the normalizer of the
block system $\mathcal B$ consisting of the $D$-orbits in $\Omega$.
By Lemma \ref{lem_1}, $L\leq K$.
Consider a $D$-orbit $\Delta\in \mathcal{B}$. 
By \cite[Theorem 4.2A]{dixon}, the centralizer $C$ of $D/C_D(\Delta)$ 
in $\mathrm{Sym}(\Delta)$ is isomorphic to a section of $D/C_D(\Delta)$,
hence nilpotent. Because the actions of $D$ and 
$L$ on $\Delta$ commute, the group $L/C_L(\Delta)$ is isomorphic
to a subgroup of $C$, hence nilpotent. On the other hand, $L$ is perfect.  
It follows that $L=C_L(\Delta)$.
Since this happens for every orbit $\Delta$, we obtain $L=1$.
But this is impossible, because $\lambda(G)=n\ge1$. 
Hence, $D=1$.
It follows immediately, that the intersection $A=L\cap F$ is non-trivial.

We will show now, that $A$ is a $p$-group.
To this end, assume that the order of $A$ is divisible by two different 
primes $p$ and $q$. 
Recall, that  $A$  is a normal subgroup in $G$, because $F$ and $L$ 
are normal subgroups in $G$. Moreover, $A$ is nilpotent.
Let $P$ resp.\ $Q$ be the Sylow $p$- resp.\ the Sylow $q$-subgroup of $A$.
Then $P$ and $Q$ are normal subgroups of $G$.
Since the stabilizer
$G_\omega$ of a point $\omega\in\Omega$ has trivial core, $P$ and $Q$
cannot be contained in $G_\omega$.
Application of Lemma \ref{lem_1} to
$QG_\omega$ in the role of $H$ yields $L\le QG_\omega$, and 
Dedekind's modular law gives $A=A\cap QG_\omega=Q(G_\omega\cap A)$.
It follows that $P\le G_\omega\cap A$, 
a contradiction to $P\not\le G_\omega$.

Now $A$ is a $p$-group for some prime $p$.
Assume next, that $F$ is not a $p$-group.
Then $F$ has a non-trivial $q$-subgroup $Q$ for some prime $q\neq p$.
Again, $Q$ is normal in $G$ and $QG_\omega>G_\omega$. Therefore,
$L\le QG_\omega$ and 
$A=F\cap L\le F\cap QG_\omega=Q(F\cap G_\omega)$.
It follows that $A\le F\cap G_\omega$, 
a contradiction to $A\not\le G_\omega$.

It remains to show, that $A$ is abelian.
Consider a group $B$, which is normalized by $G_\omega$ and satisfies
$F_\omega<B\le F$. Lemma \ref{lem_1} shows, that $L\le BG_\omega$, whence
$$ 
A=L\cap BG_\omega\cap F=L\cap B(G_\omega\cap F)=
L\cap BF_\omega=L\cap B\le B. 
$$
Assume, that there exist two distinct \emph{minimal} such groups $B_1$ and
$B_2$. This would imply
$
A\le B_1\cap B_2 = F_\omega\leq G_\omega,
$
in contradiction to $G_\omega$ having trivial core in $G$.
We conclude, that for each $\omega\in \Omega$, there is a 
unique minimal subgroup $B(\omega)$, which is normalized by $G_\omega$ 
and satisfies $F_\omega <B(\omega)\leq F$.

Because $F$ is nilpotent, the group $K(\omega)=N_F(F_\omega)$ 
is strictly larger than $F_\omega$. Moreover, $K(\omega)$ 
is normalized by $G_\omega$. Thus, $B(\omega)$ is contained in $K(\omega)$.
It follows, that the subgroup $A$ of $B(\omega)$ normalizes $F_\omega$ too.
Now $AF_\omega$ is a subgroup of $B(\omega)$, 
which is normalized by $G_\omega$. Since $A$ is not contained in $F_\omega$,
we obtain $AF_\omega=B(\omega)$. 

The normal subgroup $V=\bigcap_{\omega\in\Omega}B(\omega)$ of $G$ 
contains $A$. And so it suffices to show, that $V$ is abelian.
Consider a maximal subgroup $M$ of $B(\omega)$ containing $F_\omega$.
Because $F_\omega$ is normalized by $G_ \omega$, the $G_\omega$-core
$M_0=\bigcap_{g\in G_\omega}M^g$ of $M$ contains $F_\omega$.
By minimality of $B(\omega)$, we have $M_0=F_\omega$.
Since $F$ is nilpotent, the commutator subgroup $\big(B(\omega)\big)'$ of 
$B(\omega)$ is contained in the $G_\omega$-conjugates of $M$
and hence in $M_0=F_\omega$.
It follows that $V'
%=\left(\bigcap_{\omega\in\Omega}B(\omega)\right)'
\le \bigcap_{\omega\in\Omega}\big(B(\omega)\big)'\le
\bigcap_{\omega\in\Omega}F_\omega=1$\,.
\end{proof}

We shall show now that the Frattini subgroup $F$ is even trivial. 

\begin{prop}\label{trivial frattini} 
The group $G$ has trivial Frattini subgroup.
\end{prop}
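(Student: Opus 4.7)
Assume for contradiction that $F\neq 1$. By the previous proposition, $A=F\cap L$ is then a non-trivial abelian normal $p$-subgroup of $G$.

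The plan is to use the module machinery of Section~2 to contradict the counterexample status of $(G,\Omega)$. Specifically, we will construct a non-trivial irreducible $\mathbb CG$-submodule $V\hookrightarrow\mathbb C\Omega$ (via Lemma~\ref{l:module}) and a tuple $\bar x$ in a Sylow $2$-subgroup $P$ of $G$ for which the $n+1$ translates of a suitable non-zero vector $v\in V$ under the partial subwords $w_0(\bar x),\dots,w_n(\bar x)$ are pairwise distinct. By Lemma~\ref{projec} the projection $\pi\colon\mathbb C\Omega\to V$ sends $\omega$ to a non-zero vector; exploiting the irreducibility of $V$ to absorb a suitable element of $H$ into the choice of $v$, we identify $(\omega)\pi$ with a non-zero scalar multiple of $v$, so the distinctness in $V$ lifts to pairwise distinctness of $\omega\cdot w_j(\bar x)$ in $\Omega$, contradicting the counterexample hypothesis.

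For the module construction we exploit the rarefied structure: $L/A=LF/F$ is the unique minimal normal subgroup of $G/F$ and decomposes as a direct product $\prod_{i=0}^m M_i/A$ of pairwise isomorphic simple groups from $\mathcal L$, transitively permuted by $G$. Fix $M_0$, let $H=N_G(M_0)$, and via Lemma~\ref{dihedral} choose $A_0\le M_0$ as the preimage of a 2-element subgroup of a dihedral subgroup of order $2p'$ of $M_0/A$; then $A^*=\prod_i A_0^{t_i}$ has the favourable 2-structure needed to control the final tuple. Applying Lemma~\ref{l:module} with $R=(G_\omega\cap H)\widehat{M_0}A_0$ and a suitable irreducible constituent $W$ of $\mathrm{Ind}_R^H(\mathbb C)$ on which $A_0$ (hence $M_0$) acts non-trivially yields $V=\mathrm{Ind}_H^G(W)\hookrightarrow\mathbb C\Omega$.

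The hypothesis of Lemma~\ref{wreath property} demands $\bar g\in(N_G(A^*))^k$ with $M_0^{w_j(\bar g)}$ pairwise distinct for $0\le j\le n-1$. Here we use the minimality of $(G,\Omega)$: the conjugation action of $G$ on $\{M_0,\dots,M_m\}$ factors through $G/F$, which is $n$-rarefied by Lemma~\ref{lem_0}; together with Lemma~\ref{lem_1} (which forces any proper overgroup of $G_\omega$ to contain $L$, strongly constraining the point stabilizers of this reduced action), the minimality of the pair $(G,\Omega)$ produces the required $\bar g$ inside a Sylow 2-subgroup of $N_G(A^*)$ that realizes the distinctness of conjugates. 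Lemma~\ref{wreath property} then supplies $\bar b\in(A^*)^k$ and a non-trivial $u\in\langle zH\rangle$ so that $(u\otimes 1)\,w_j(\bar b\bar g)$ are pairwise distinct in $V$; arranging that $A^*$ and the chosen Sylow 2-subgroup of $N_G(A^*)$ both lie in a common Sylow 2-subgroup $P$ of $G$ places $\bar b\bar g$ in $P^k$, completing the contradiction and forcing $F=1$.

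The main obstacle is the Sylow-2 coordination: one must choose $A_0$ essentially as a 2-subgroup (so that $A^*\le P$), locate $\bar g$ inside a Sylow 2-subgroup of $N_G(A^*)$ realizing the distinctness of the conjugates of $M_0$ (via the minimality reduction to $G/F$), and simultaneously embed $A^*$ and $N_G(A^*)$-data into a single Sylow 2-subgroup of $G$. The subsidiary identification of $(\omega)\pi$ with a scalar multiple of $u\otimes 1$ in $V$ is handled by the irreducibility of $V$ together with the freedom in the choice of $u=zr\in\langle zH\rangle$ granted by Lemma~\ref{wreath property}.
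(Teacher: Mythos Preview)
Your proposal has a structural gap that breaks the application of the module lemmas. You define the $M_i$ as the preimages in $L$ of the simple components of $L/A$, so each $M_i$ contains the non-trivial subgroup $A$; likewise every $A_0^{t_i}$ contains $A$. Hence $\prod_i M_i$ and $A^*=\prod_i A_0^{t_i}$ are \emph{not} direct products, and Lemmas~\ref{induced module}, \ref{wreath property}, \ref{l:module} simply do not apply in the form you invoke them. The paper handles this by first proving $\Phi(L)=1$ (via the block system of $\Phi(L)$-orbits and Lemma~\ref{lem_1}), so that $L$ splits over $A$ and one obtains an honest complement $K=\prod_i S_i$ inside $S_1(G)$. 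Only then does the module machinery become available. A second problem: you set $R=(G_\omega\cap H)\widehat{M_0}A_0$, which \emph{contains} $A_0$; but to get a constituent $W$ with $[W,A_0]\neq 0$ you need $A_0$ not to lie in (the $H$-core of) $R$, and you never verify this. In the paper the analogous subgroup $Z$ is carefully shown not to contain $Q_1$, with separate arguments for $p$ odd and $p=2$.

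There is also a mismatch in the overall architecture. In the paper the module argument is not run in $G$ but in a proper subgroup $X$ (the normalizer of a suitable Sylow-type subgroup $Q$ of $K$), obtained via a Frattini argument; one checks $\lambda(X)=n-1$, so the minimality of $(G,\Omega)$ applies to $X$ acting on the block system $\{W_i\}$. That step yields only the implication ``$R\le G_\omega\Rightarrow K\le G_\omega$''. The final contradiction comes from a separate Frattini-type argument: since $F\not\le G_\omega$, Lemma~\ref{lem_1} gives $S_1(G)\le FG_\omega$, so some conjugate of $R$ lies in a point stabilizer; pushing $K$ into every stabilizer along an $N_G(R)$-orbit and using $G=FKN_G(R)$ forces $K$ into the core of $G_\omega$, a contradiction. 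Your outline omits this entire endgame and tries to contradict the counterexample status directly, which (even modulo the direct-product issue) would require resolving the ``Sylow-$2$ coordination'' you flag as the main obstacle; the paper resolves it by treating $p>2$ and $p=2$ separately, with different choices of $Q$ and, for $p=2$, an additional Frattini descent through the dihedral groups of Lemma~\ref{dihedral}.
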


\begin{proof}
Assume that $F\not=1$.
In the sequel, $p$ will denote the prime dividing the order of 
$F$ (Proposition \ref{frattini}).
We prove firstly, that $L$ splits over 
$A=L\cap F$ and that $S_1(G)$ splits over $F$.

Let $B=\Phi(L)$, assume $B\not=1$,  and consider any $B$-orbit $\Gamma$ 
in $\Omega$. 
If $\Gamma=\Omega$ would hold, we could apply the Frattini argument to 
obtain $G=BG_\omega$ for every $\omega\in\Omega$. 
But this is clearly impossible, because $B\leq F$ and $F$ consists of 
non-generators for $G$ (\cite[5.2.12]{robinson}). 
It follows, that the $B$-orbits form a proper block system in the
$G$-set $\Omega$.
By Lemma \ref{lem_1}, $L$ normalizes every $B$-orbit $\Gamma$.
In particular, the Frattini argument yields $L=BL_\gamma$ for 
any point $\gamma\in \Gamma$, and this is again impossible. 
This contradiction shows, that $B=1$.

Applying Proposition \ref{frattini} and \cite[Theorem 5.2.13]{robinson}, 
we obtain that $L$ splits over the abelian normal subgroup
$A$. A complement $K$ to $A$ in $L$ is a complement to $F$ in $S_1(G)$, 
so that $S_1(G)$ splits over $F$ too. 
Because $S_1(G)/F$ is a minimal normal subgroup in $G/F$, there exists a
simple group $S$ in $\mathcal{L}$ such that
$K=\prod_{i=1}^\ell S_i$
where each $S_i$ is a copy of $S$.
In the sequel, we will distinguish the two cases when $p$ is odd and when 
$p=2$, but try to treat them simultaneously.

When $\mathbf{p>2}$, let $R$ be any Sylow $2$-subgroup of $K$. 
Clearly $R=\prod_{i=1}^\ell R_i$, where each $R_i$ is a 
Sylow $2$-subgroup of $S_i$. When $\mathbf{p=2}$, Lemma \ref{dihedral}
gives us an Aut$(S)$-invariant $S$-conjugacy class 
of dihedral subgroups of order $2r$ in $S$,
for some odd prime $r$.
Let $\mathcal D_i$ denote the copy of this 
conjugacy class in $S_i$. We can now choose a 
subgroup $U_i\in \mathcal{D}_i$ for each $i\in\{1, \dots , \ell\}$ and
define $U=\prod_{i=1}^\ell U_i$ and $R=\prod_{i=1}^\ell R_i$ where $R_i$
is the Sylow $r$-subgroup in $U_i$.\vskip1ex

{\bf Next step.} \ Our next aim is  to show, that if $R\leq G_\omega$, 
then $K\leq G_\omega$.\vskip1ex

Assume by way of contradiction, 
that $R\leq G_\omega$ and $S_i\not\le G_\omega$ for some $i$.
Without loss, $i=1$. Because the simple group $S_1$ is generated by the 
conjugates of $R_1$ in $S_1$, there exists $s_1\in S_1$ such that 
${R_1}^{s_1}\not\le G_\omega$.
Let $Q=R^{s_1}$ and $Q_i=R_i^{s_1}$ for all $i$.
Clearly $Q_i=R_i$ when $i\not=1$. Therefore, $Q_i\leq G_\omega$ if and 
only if $i\neq 1$.
When $\mathbf{p=2}$, we also let $D=U^{s_1}$ and $D_i={U_i}^{s_1}$ for all $i$. 

In the case when $\mathbf{p>2}$, an application of the Frattini argument 
within the group $G/F$ yields  
$G/F=(S_1(G)/F)(Y/F)$, where $Y/F$ denotes the normalizer of the 
Sylow $2$-subgroup $QF/F$ in $G/F$.
For every $y\in Y$, the Sylow 2-subgroup $Q^y$ of $QF$ is conjugate to
$Q$ by a suitable element $f\in F$, and $yf^{-1}\in N_G(Q)$.
This shows that $G=S_1(G)X$ for $X=N_G(Q)$.

In the case when $\mathbf{p=2}$, the $G$-conjugates of $DF/F$  
are conjugate in $S_1(G)/F$. Therefore, the Frattini argument yields 
$G=S_1(G)N_G(DF)$. Because $Q$ is a Sylow $r$-subgroup of $DF$, it 
is possible to apply the Frattini argument again. It follows, that
$N_G(DF)=FDX$, where 
$X=N_{N_G(FD)}(Q)\geq D$. Altogether, $G=S_1(G) X$.\vskip1ex 

Conclusion:\quad $G=S_1(G)X$ \ where \
$\left\{\begin{array}{ll}
X=N_G(Q) & \text{for}~~p>2\\
X=N_G(Q)\cap N_G(FD) & \text{for}~~p=2
\end{array}\right.$\vskip1ex

Since $S_1(G)/F$ is minimal normal in $G/F$, the factorization 
$G=S_1(G)X$ entails, that $X$ acts transitively on the set
$\{S_iF \mid i=1, \dots, \ell\}$ by conjugation.
Consider $x\in X$ and indices $i,j$ such that $(S_iF)^x=S_jF$.
Then $Q_i^x \le Q\cap S_jF \le K\cap S_jF = S_j(K\cap F) = S_j$,
whence $Q_i^x \le Q\cap S_j = Q_j$. This shows, that
$X$ also acts tran\-sitively on the set
$\{Q_i\mid i=1, \dots, \ell\}$ by conjugation.
The group $X/(X\cap S_1(G))$ is $(n-1)$-rarefied, because 
it is isomorphic to $G/S_1(G)$. Moreover, $X<G$, because $Q$ is
not normal in $G$. Altogether, $\lambda(X)=n-1$. 

Consider $X_1=N_X(Q_1)$ as the stabilizer of $Q_1$ under the
conjugation action of $X$ on $\{Q_i\mid i=1, \dots, \ell\}$.
In the case when $\mathbf{p>2}$, we let $Z=X_1\cap X_\omega$. 
Clearly, $Q_1\not\le Z$ because 
$Q_1\not\le X_\omega$.
In the case when $\mathbf{p=2}$, we let $Z=(X_1 \cap X_\omega )(F\cap X)$.
In order to see, that  $Z$ does not contain $Q_1$ also in this case, we 
argue by contradiction:
Assume that $Q_1\leq Z$. 
Then 
$$
\omega Q_1\subseteq \omega Z= \omega (X_1 \cap X_\omega )(F\cap X)=
\omega (F\cap X).
$$
The group $X\cap F$ centralizes $Q$, because $[Q,X\cap F]\le Q\cap F=1$.
Therefore, 
$Q_1$ is normal in $Z$ and $\omega Q_1$ is a block under the action of $Z$ 
on $\omega Z$. 
It follows, that
$r = |\omega Q_1|$ divides  $|\omega (F\cap X)|=
|(F\cap X): (F\cap X_\omega)|$. But the latter is a power of 2, a 
contradiction.\vskip1ex 

We have now shown that $Q_1\not\le Z$ in all cases.\vskip1ex

However, $Z$ contains the normal subgroup 
$\widehat{Q_1}=\prod_{i=2}^\ell Q_i$ of $X_1$.
Hence we can choose an 
irreducible constituent $W$ of the $\mathbb CX_1$-module
$\mathrm{Ind}{\stackrel{X_1}{_Z}}(\mathbb{C})$, on which $Q_1$ acts 
non-trivially. 
Note that $\widehat{Q_1}$ acts trivially on $W$,  
because it acts trivially on the whole 
$\mathrm{Ind}{\stackrel{X_1}{_Z}}(\mathbb{C})$.
Since the normal subgroup $Q_1$ of $X_1$ acts non-trivially on $W$,
its fixed point space $C_W(Q_1)$ is a proper $X_1$-submodule of $W$,
whence $C_W(Q_1)=0$.
We choose a right transversal $T=\{t_1, t_2, \ldots, t_\ell\}$ 
of $X_1$ in $X$ such that $t_1=1$ and ${Q_1}^{t_i}=Q_i$ for all $i$.
Then 
$
V=\,\mathrm{Ind}{\stackrel{X}{_{X_1}}}(W)\,=\,
\bigoplus_{i=1}^\ell (W\otimes_{X_1} t_i)\,.   
$
We apply Lemma \ref{induced module} with 
\,$X,\,X_1,\,Q,\,Q_1$ \,in the roles of \,$G,\,H,\,M^*=\langle M_0^G\rangle,\,M_0$ \,(resp.) and
obtain, that $V$ is an irreducible $\mathbb CX$-module.
The 
subspaces $W_i=W\otimes_{X_1} t_i$ are blocks under the action of $X$, and 
each factor $Q_i$ acts non-trivially only on $W_i$. Note that $C_W(Q_1)=0$ 
entails $C_{W_i}(Q_i)=0$ for all $i$.

Let $\Gamma=\omega X$ and consider the permutation module 
$\mathbb{C}\Gamma$. We apply Lemma \ref{l:module}
with 
\,$X,\,X_1,\,Z,\,Q,\,Q_1$ \,in the roles of 
\,$G,\,H,\,R,\,M^*,\,M_0$ \,(resp.).
Note that the hypotheses of the lemma are satisfied, because  
$(G_\omega\cap X_1)\widehat{Q_1}\le X_\omega\cap X_1\le Z$.
It follows, that $\mathbb C\Gamma$ contains $V$ as a direct summand.

Consider the image $v=\sum_{i=1}^\ell (v_i\otimes t_i)$ 
of $\omega$ under the projection of $\mathbb C\Gamma$
onto $V$. Then $v\neq 0$ by Lemma \ref{projec}.
If $b\in Q_j$ for some $j\geq 2$, then $vb=v$ because
$Q_j\leq G_\omega$. On the other hand, every $W_i$ $(i\neq j)$ is
fixed pointwise by $Q_j$, whence $(v_i\otimes t_i)b=v_i\otimes t_i$.
Thus,
$$ 
v~=~vb~=~\Big(\sum_{i=1}^\ell v_i\otimes t_i\Big)b~=~
\Big(\sum_{i\not=j} (v_i\otimes t_i)\Big) + (v_j  \otimes t_j)b~. 
$$
We conclude, that $ v_j  \otimes t_j= (v_j  \otimes t_j)b$ for 
all $b\in Q_j$ and that $v_j\otimes t_j\in C_{W_j}(Q_j)=0$.
Hence $v\in W_1$.

Consider the block system $\mathcal B=\{W_i\mid 1\le i\le \ell\}$ in 
the $\mathbb CX$-module $W$. For every $x\in N_X(\mathcal B)$ and every 
$j\in\{1,\ldots,\ell\}$, the group $Q_j^x$ fixes 
$\bigoplus_{i\neq j} W_i$ pointwise. It follows, that $Q_j^x=Q_j$ for 
every $j$.
In particular, $N_X(\mathcal B)$ normalizes every component of $S_1(G)/F$,
whence $N_X(\mathcal B)S_1(G)/S_1(G)$ is solvable.
But then,  $N_X(\mathcal B)\leq R_2(G)$ and 
$\lambda(X/N_X(\mathcal B))=\lambda(X)=n-1$.

Let $w\in F_\infty$ be any reduced $k$-variable word 
of length $n$. Again, we distinguish our two cases.\vskip1ex

{\bf Case} $\mathbf{1.\quad p>2.}$\vskip0ex
\noindent
With respect to its action on $\mathcal B$, the group 
$X/N_X(\mathcal B)$ satisfies Theorem C. Hence there exist a Sylow 
2-subgroup $P$ in $X$ and $\ov{x}\in P^k$ such that 
$W_1 w_i(\ov{x})\not=W_1 w_j(\ov{x})$, for all $0\leq i<j\leq n-1$.
An application of Lemma \ref{wreath property}
with 
\,$X,\,X_1,\,Q,\,Q$ \,in the roles of \,$G,\,H,\,M^*,\,A^*$ \,(resp.)
yields an element $u\in vX_1$ and a $k$-tuple $\ov{b}\in Q^k$
such that 
$u w_i(\ov{b}\ov{x})\not =u w_j(\ov{b}\ov{x})$ for all $0\leq i<j\leq n$.
Because the 2-group $Q$ is normal in $X$, we have $Q\le P$ and
$\overline{b}\overline{g}\in P^k$.
If $u=vg$, with $g\in X_1$, then $u$ is the projection of $\delta=\omega g$
 on $V$.
Hence $\delta w_i(\ov{b}\ov{x})\not =\delta w_j(\ov{b}\ov{x})$, 
for all $0\leq i<j\leq n$.
But then, $G$ would not be a counterexample to Theorem C.
This contradition shows, that $R\le G_\omega$ implies $K\le G_\omega$
in the case when $p$ is odd.\vskip1ex

{\bf Case} $\mathbf{2.\quad p=2.}$\vskip0ex
\noindent
Since $Q\trianglelefteq D$ and $X=N_G(Q)\cap N_G(DF)$, we have 
$[D,X]\leq DF\cap N_G(Q)=DF\cap X=D(F\cap X)$.
In particular, $D(F\cap X)\trianglelefteq X$.
Pick a Sylow 2-subgroup $J_0$ in $D$. Then $J =J_0(X\cap F)$ is a Sylow 2-subgroup
in $D(X\cap F)$, and the Frattini argument gives $X=D(X\cap F)Y$, where 
$Y=N_X(J)$.
But $D(X\cap F)\le S_1(G)$, and so $G=S_1(G)X=S_1(G)Y$.
   
Clearly, $J_0=\prod_{i=1}^\ell J_i$ for certain 
Sylow 2-subgroups $J_i$ of $D_i$ $(1\le i\le\ell)$.
Moreover, $Y$ must permute the subgroups $J_i(X\cap F)$ $(1\le i\le\ell)$
transitively via conjugation, as well as the subgroups $S_iF$ $(1\le i\le\ell)$. 
Therefore, it follows as above with $J_i(X\cap F)$ in place of $Q_i$, that 
$Y$ acts transitively on $\mathcal B$ and that $Y/N_Y(\mathcal B)$
has non-solvable length $n-1$.

In particular, $Y/N_Y(\mathcal B)$ satisfies Theorem C, and there 
exist a  Sylow $2$-subgroup $P$ of $Y$ and  $\ov{x}\in P^k$ such that 
$W_1 w_i(\ov{x})\not=W_1 w_j(\ov{x})$, for all $0\leq i<j\leq n-1$. 
Let $Y_1=N_Y(J_1)=N_Y\big(J_1(X\cap F)\big)$.
Note, that the intersection $X\cap F$ acts trivially on $V$, 
because it is a normal subgroup of $X$
with trivial action on $W$. 
Therefore we can apply Lemma \ref{wreath property}
with the quotients of the groups
\,$Y,\,Y_1,\,J,\,J$ \,modulo $(X\cap F)$
\,in the roles of \,$G,\,H,\,M^*,\,A^*$ \,(resp.).
This yields an element $u\in vY_1$ and a $k$-tuple $\ov{b}\in J^k$
such that 
$u w_i(\ov{b}\ov{x})\not =u w_j(\ov{b}\ov{x})$ for all $0\leq i<j\leq n$.

Because the 2-group $J$ is normal in $Y$, we have $J\le P$ and
$\overline{b}\overline{g}\in P^k$.
If $u=vg$, with $g\in Y_1$, then $u$ is the projection of $\delta=\omega g$
 on $V$.
Hence $\delta w_i(\ov{b}\ov{x})\not =\delta w_j(\ov{b}\ov{x})$, 
for all $0\leq i<j\leq n$.
But then, $G$ would not be a counterexample to Theorem C.
This contradition shows, that $R\le G_\omega$ implies $K\le G_\omega$
in the case when $p=2$ too.\vskip1ex

Conclusion:\quad $R\le G_\omega$ implies $K\le G_\omega$ for all $p$.
\vskip1ex

Since the normal subgroup $F$ of $G$ is not contained in $G_\omega$,
Lemma \ref{lem_1} gives $S_1(G)=FL\leq FG_{\omega}$ 
and $S_1(G)=F(S_1(G)\cap G_\omega)$
for every $\omega\in\Omega$. 
Since $R$ is a 2-group resp.\ an $r$-group, a conjugate of $R$
is contained in a Sylow subgroup of $S_1(G)\cap G_{\omega}$.
Therefore $R\le G_\alpha$ for some $\alpha\in\Omega$, and hence 
$K\le G_\alpha$ as well. 

Clearly $R$ is normal in $N=N_G(R)$, and so $R\le G_\beta$ and
$K\le G_\beta$ for every $\beta\in\alpha N$. In particular, $K$
is contained in the intersection 
$$H=\bigcap_{\beta\in\alpha N} (S_1(G)\cap G_\beta).$$
In particular, $1\neq K\le H < S_1(G)$ and $H$ is normalized by $N$.
Consider $M=HN$.

An application of the Frattini argument to the subgroup $R$ of
$S_1(G)$ shows, that $G=S_1(G)N$. 
Since $S_1(G)=FK$ and 
$K\leq M$, we have that $G=FM$. 
However, $F$ consists of non-generators for the group $G$.
Therefore $G=M$. 
But this implies $\Omega=\alpha HN=\alpha N$
and $1\neq H\le\bigcap_{\beta\in\alpha N} 
G_\beta=\bigcap_{\omega\in\Omega} G_\omega=1$, a contradiction.
\end{proof}

%%%%%%%%%%%%%%%%%%%%%%%%%%  Section 5  %%%%%%%%%%%%%%%%%%%%%%%%

\section{Proof of the main theorems}
We are now well-prepared to take the final steps in the proof of Theorem C.

\begin{namedthm}{Theorem C}%\label{thm:C}
Let $G$ be a finite group with $\lambda(G)= n$, and let $\Omega$ be a
faithful transitive $G$-set. Then, for every $\omega\in\Omega$ and for every 
non-trivial reduced word 
$w=w(x_1,\ldots,x_k)\in F_\infty$ of length $n$,
there exist a Sylow $2$-subgroup $P$ of $G$ and a tuple $\overline{g}\in P^k$ 
such that the points \
$
\omega w_0(\overline{g}),\,\omega w_1(\overline{g}),\, 
\dots ,\, \omega w_n(\overline{g})
$ \
are pairwise distinct. In particular, $G\in\mathcal P_n$. 
\end{namedthm}

\begin{proof}
At first we shall explain, why the last claim follows from the central statement in Theorem C.
Consider a finite group $G$ with nonsolvable length $n$, and let $\Omega$ be a faithful $G$-set. 
Then there exist finitely many $G$-orbits $\Omega_1, \dots, \Omega_r$ in $\Omega$ such that
$N_1\cap\ldots\cap N_r = 1$, where $N_i$ denotes the kernel of the action of $G$ on $\Omega_i$.
From \cite[Lemma 2.5]{fuma}, there exists some $k$ such that $\lambda(G)=\lambda(G/N_k)$.
Application of the first part of Theorem C to the action of $G$ on $\Omega_k$ now gives
$G/N_k\in\mathcal P_n(\Omega_k)$, whence $G\in P_n(\Omega)$.
\vskip1ex
We shall now prove the central statement of Theorem C. To this end, assume that Theorem C 
is wrong, and consider a minimal counterexample $(G,\Omega)$ with $\lambda(G)=n$.
Recall, that minimality of the counterexample refers to the number $n+|G|+|\Omega|$,
and that $G$ acts transitively and faithfully on $\Omega$.
Moreover, since $G$ is a counterexample, there exist a point $\omega\in\Omega$ and a word
$w=w(x_1,\ldots,x_k)\in F_\infty$ of length $n$ such that the points
$\omega w_0(\ov g),\ldots,\omega w_n(\ov g)$ are not pairwise distinct for any choice 
of a tuple $\ov g\in P^k$ with entries from any Sylow 2-subgroup $P$ of $G$.
The point $\omega$ and the word $w$ will be kept fixed throughout the proof.

By Proposition \ref{trivial frattini}, the unique minimal normal 
subgroup $S_1(G)$ in $G$ is the product $\prod_{j\in \Delta} S_j$
of copies $S_j$ of a simple group  $S\in\mathcal{L}$.
Let $\Delta=\{1,\dots,\ell\}$.
We also write $S^*$ instead of $S_1(G)$ and let 
$\pi_j\colon S^*\longrightarrow S_j$ denote the canonical projection.
Lemma \ref{dihedral}
gives us an Aut$(S)$-invariant $S$-conjugacy class of dihedral subgroups of 
order $2p$ in $S$,
for some odd prime $p$.
Let $\mathcal D_j$ denote the copy of this 
conjugacy class in $S_j$. 
\vskip1ex  

We aim to show firstly, that $G=S^*N_G(D^*)$ for a certain 
subgroup  $D^*$ of $S^*$, which is the direct product of dihedral 
groups $D_j\in\mathcal D_j$ $(1\le j\le\ell)$.\vskip1ex

{\bf Case 1.} {The subgroup $G_\omega\cap S^*$ is not a subdirect product 
of the components $S_j$.}
\vskip0ex
\noindent
Without loss we assume that $(G_\omega\cap S^*)^{\pi_1}<S_1$. 
Then it is possible to find $D_1\in\mathcal D_1$
such that the Sylow $p$-subgroup $C_1$ of $D_1$ is not contained 
in $(G_\omega\cap S^*)^{\pi_1}$. Because $D_1$ is generated by 
its involutions, there is also a Sylow $2$-subgroup $Q_1\leq D_1$, 
which is not contained in $(G_\omega\cap S^*)^{\pi_1}$. For every
$j\not=1$ we choose $D_j\in \mathcal{D}_j$ arbitrarily and form 
$D^*=\prod_{j\in \Delta}D_j$.
The $G$-conjugates of $D^*$ are precisely the $S^*$-conjugates of $D^*$.
Therefore the Frattini argument yields $G=S^*N_G(D^*)$.
\vskip1ex

{\bf Case 2.} {The subgroup $G_\omega\cap S^*$ is  a subdirect product 
of the components $S_j$.}
\vskip0ex
\noindent
In this case, there exists a partition $\mathcal U=\{\Delta_i\mid i=1, \dots, m\}$
of $\Delta$ such that
$ G_\omega\cap S^*=\prod_{i=1}^m U_i $ where
each $U_i$ is a diagonal subgroup of 
$S(\Delta_i)=\prod_{j\in \Delta_i}S_j$.
Here, some of the $\Delta_i$ must contain at least two points.  
Without loss we may assume, that $|\Delta_1|\geq 2$
for the set $\Delta_1$ containing 1.
The conjugation action of $G_\omega$ on $G_\omega\cap S^*$ gives rise
to an action of $G_\omega$ on the set $\mathcal{U}$, and $G_\omega $ 
normalizes   
$\mathcal{O}=\{\Delta_i\in \mathcal{U}\mid \left|\Delta_i\right|\geq 2\}$.

Assume, that the action on $\mathcal{O}$ is not transitive, and consider
a $G_\omega$-orbit $\mathcal{I}\subseteq \mathcal{O}$.
Because we are in case 2, it is possible to choose $\mathcal I$ such that
the $G_\omega$-invariant subgroup 
$S(\mathcal{I})=\prod_{\Delta_i\in \mathcal{I}}\prod_{j\in \Delta_i}S_j$
is not contained in $G_\omega$.
From Lemma \ref{lem_1}, the monolith 
$S^*$ is contained in $S(\mathcal{I})G_\omega$, and Dedekind's identity
yields $S^*=S^*\cap S(\mathcal{I})G_\omega=
(S^*\cap G_\omega)S(\mathcal{I})$.
Since $\mathcal I\neq\mathcal O$, 
there exists $\ov\Delta\in\mathcal O\setminus\mathcal I$.
But now, the projection of 
$S^*=(S^*\cap G_\omega)S(\mathcal{I})$ on $S(\ov{\Delta})$ 
is isomorphic to $S$, a contradiction to $|\ov\Delta|\ge2$.
We conclude, that all the $\Delta_i$ of size $\ge2$ are permuted 
transitively by $G_\omega$. In particular, they all have the same size $d$. 

There are automorphisms $\sigma_2, \dots, 
\sigma_d\in \mathrm{Aut}(S_1)$ such that
$$U_1=\{(s,s^{\sigma_2}, \dots , s^{\sigma_d})\mid s\in S_1\}\,.$$
For each $j\in\Delta_1$, we can choose a subgroup $D_j\in \mathcal D_j$,
such that $U_1\cap\big(\prod_{j\in \Delta_1} D_j\big) =1$\,. \
We can proceed in the same fashion with every $\Delta_i\in\mathcal O$.
When $S_i\le G_\omega$, we choose $D_i\in\mathcal D_i$ freely.
Let $D^*=\prod_{i\in \Delta}D_i$.
Note, that $D^*\cap G_\omega$ is the product of precisely 
those factors $D_j$,
for which $S_j\leq G_\omega$.
In particular, $(D^*\cap G_\omega)^{\pi_1}=1$.
And also in this case, the Frattini argument gives $G=S^*N_G(D^*)$. 
\vskip1ex
We proceed to work with the group $D^*$ constructed in the two cases.
For each $j\in \Delta$ choose a Sylow $2$-subgroup $Q_j$ of $D_j$ 
(in case 1, the group $Q_1$ has already been choosen), and let $C_j$ be the 
Sylow $p$-subgroup of $D_j$.
As usual, we let $C^*=\prod_{j\in \Delta}C_j$ and 
$Q^*=\prod_{j\in \Delta}Q_j$.
Further applications of the Frattini argument yield
$G=S^*N_G(D^*)$ and $N_G(D^*)= D^*N_{N_G(D^*)}(Q^*)$, whence
$G=S^*X$ for $X=N_G(D^*)\cap N_G(Q^*)$.

It follows, that $X$ must act transitively via conjugation
on the set $\{D_j\mid j\in \Delta\}$. Moreover, $X<G$ and
$G/S^*\simeq X/(X\cap S^*)$ imply that $\lambda(X)=n-1$. 
Consider the subgroup $A=C^*X$ and note, that $A$ contains $D^*=C^*Q^*$.
We observe, that any non-trivial element $c$ from $C^*$ with non-trivial 
constituent in some $C_j$ can be conjugated by an involution in $D_j$,
 whence 
$C_j$ is contained in the normal closure of $c$ in $A$.
Together with the transitivity of the action of $X$ on the set $\{D_j\mid j\in \Delta\}$
this yields, that $C^*$ is a minimal normal subgroup in $A$.   
Now $C_1$ is not contained in $G_\omega$. It follows, that 
$C^*$ acts faithfully on the set $\Gamma=\omega A$.
Consider the kernel $K$ of the action of $A$ on $\Gamma$. 
Since $K\cap C^*=1$, the subgroup $K$ commutes with $C^*$.
But then $K$ normalizes every $C_j$ and hence every $S_j$.
It follows that $K\le R_2(G)$ and $\lambda(A/K)=\lambda(A)=n-1$. 

If $K$  contained an involution $a$ from $D^*$, then 
the subgroup $[a,D^*]$ of $K$ would contain one of the $C_j$.
This argument shows, that $D^*\cap K=1$. 
Consider $H=N_A(D_1)$ and
$R=(H\cap A_\omega)\widehat{D_1}$, where $\widehat{D_1}=\prod_{i\not=1}D_i$.
Assume, that $C_1\leq R$. 
Then $C_1\le D^*\cap R \le (D^*\cap G_\omega)\widehat D_1$. 
But this implies $1\neq C_1\le (D^*\cap G_\omega)^{\pi_1}$, 
a contradiction. 
It follows, that $C_1\not\le R$.
In particular, 
there exists then an irreducible component
$W$ in the $\mathbb CH$-module $\mathrm{Ind}{\stackrel{H}{_R}}(\mathbb{C})$ 
on which $C_1$ acts non-trivially. 

From Lemmata 
\ref{induced module} and \ref{l:module}, the $\mathbb CA$-module 
$V=W\otimes_{H}\mathbb{C}A$ is (isomorphic to) a submodule 
of $\mathbb{C}\Gamma$.
Clearly, $V= \oplus_{t\in T}W_t$ with $W_t=W\otimes t$, where 
$T$ denotes a transversal of $H$ in $A$.
Since $C^*\le H$, the transversal can be chosen such that 
$1\in T\subset X$.
Note that $X$ normalizes $Q^*$.
Therefore, each index $j\in\Delta$ determines a unique 
$t\in T$ such that $D_j=D_1^t$, and even $C_i=C_1^t$ and $Q_i=Q_1^t$.  
Moreover, $D_j$ acts non-trivially on $W_t$ if and only if $D_j=D_1^t$,
because $\widehat D_1$ acts trivially on $W$.

Let $v=\sum_{t\in T}v_t\otimes t$ denote the projection of a point 
$\tau\in\Gamma$ on $V$. Then $v\neq 0$ by Lemma \ref{projec}, and we may choose 
$\tau$ in such a way, that $v_1\neq0$. 
Consider any reduced $k$-variable word $w\in F_\infty$ of length $n$.
The group $G$ permutes the set $\{S_j\mid j\in\Delta\}$ of components of
$S^*$.
Because $G$ is a minimal counterexample,
there exist a Sylow $2$-subgroup $P_0$ of $X$ and a $k$-tuple
$\overline{g}\in P_0^k$ such that the components
$ {S_1}^{w_0(\ov{g})}, \, {S_1}^{w_1(\ov{g})}, \, 
\dots, {S_1}^{w_{n-1}(\ov{g})}$ 
are pairwise distinct.
In particular, the subgroups
${D_1}^{w_0(\ov{g})}, \, {D_1}^{w_1(\ov{g})}, \, 
\dots, {D_1}^{w_{n-1}(\ov{g})}$ are pairwise distinct as well.

Clearly, $W=\langle v_1H\rangle$. And since $D_1=\langle Q_1^{C_1}\rangle$ 
acts non-trivially on $W$, the group $Q_1$ acts non-trivially on $W$ too. 
We can therefore apply Lemma \ref{wreath property} 
with \,$A,\,H,\,D_1,\,Q_1$
in place of \,$G,\,H,\,M_0,\,A_0$ \,(resp.).
It follows, that there are a $k$-tuple $\ov{b}\in {Q^*}^k$ 
and a vector $u_1=v_1h\in  v_1H$, 
such that the elements $(u_1\otimes 1)w_j(\ov b\ov g)$ are 
pairwise distinct for $0\le j\le n$.
Here, $\overline{b}\overline{g}\in (Q^*P_0)^k$ and $Q^*P_0$ 
is contained in a Sylow $2$-subgroup $P$ of $G$.

The projection \,$u =vh=\sum_{t\in T}u_t\otimes t$ 
\,of $\tau h$ on $V$ has non-trivial component 
$u_1\otimes 1$ along $W_1$. 
We now refine our choice of $T$ by asking that $T$ contains a 
transversal of $Z=H\cap P$ in $P$. 
Consider the elements $x_{ij}=w_i(\ov{b}\ov{g}){w_j(\ov{b}\ov{g})}^{-1}$
for $0\leq i<j\leq n$. 
Let $x$ be one of the elements $x_{ij}\in P$. 
Then $u_1\otimes 1\not=(u_1\otimes 1)x=u_1z\otimes t$,
where $x=zt$ for unique $z\in Z$ and $t\in T\cap P$.
Since $Z$ is a 2-group, $|u_1Z|$ is a power of 2.

Assume that $u_tC_1\subseteq u_1Z$. 
The set $u_tC_1$ is a block for the action of $H$ on $u_tH$ 
and therefore it is also a block for the action of $Z$ on $u_tZ=u_1Z$.
In particular, also $|C_1:\mathrm{Stab}_{C_1}(u_t)|=|u_tC_1|$ 
is a power of 2. 
On the other hand, the $p$-group $C_1$ is a normal subgroup of $H$,
which acts non-trivially on $W$, so that it must in fact act
fixed-point-freely on $W$, with orbits of size $p$.
This contradiction shows, that $u_tC_1\not\subseteq u_1Z$.

For each $t\in T\setminus\{1\}$, 
we choose an element $s_t\in C_1$ such that
$u_ts_t\notin u_1Z$. And we let $s_1=1$.
Consider $s^*=\prod_{t\in T}s_t^t\in\prod_{t\in T}C_1^t=C^*$ and
the point $\alpha=\tau hs^*\in\Gamma$.
The projection of $\alpha$ on $V$ is
$$
\tilde v\,=\,us^*\,=\,u_1\otimes 1\,+\sum_{1\not=t\in T}u_ts_t\otimes t.
$$
We claim that $\tilde vx_{ij}\not=\tilde v$ for $1\le i< j\le n$.

To this end we only need to show, that for every choice of $i<j$ 
there exists some $t\in T$ such that the $W_t$-components of $\tilde vx_{ij}$ 
and $\tilde v$ are different.
In the case when $x_{ij}$ normalizes $W_1$, the $W_1$-component of 
$\tilde vx_{ij}$ is $(u_1\otimes 1)x_{ij}\neq u_1\otimes 1$.
In the case when $W_1x_{ij}=W_t\neq W_1$, the $W_t$-component 
of $\tilde vx_{ij}$ has the form $y\otimes t$ for some $t\in u_1Z$, while
the $W_t$-component of $\tilde v$ is $(u_ts_t)\otimes t\not= y\otimes t$.

We have shown now, that $\alpha x_{ij}\not= \alpha$ resp.\
$\alpha w_i(\ov{b}\ov{g})\not= \alpha{w_j(\ov{b}\ov{g})}$ for all 
$0\leq i<j\leq n$. 
Recall that $\Gamma=\omega A$, so that $\omega=\alpha a$ for some $a\in A$.
Now $\omega w_i((\ov{b}\ov{g})^a) = \alpha w_i(\ov{b}\ov{g})a$ for all $i$, and so
it follows, that the points
$\omega w_i((\ov{b}\ov{g})^a)$ are pairwise distinct for $0\le i\le n$.
However, also the entries of the tuple $(\ov{b}\ov{g})^a$ lie in a common Sylow 2-subgroup
of $G$. This contradicts the choice of $\omega$ and $w$ at the very beginning of the 
proof. 
\end{proof}

\emph{Proof of Theorem B.}
Consider any faithful transitive $G$-set $\Omega$. By Theorem C,  
the group $G$ satisfies $\mathcal{P}_{n}$ on $\Omega$. 
Thus we can find $\omega\in \Omega$ and
$\ov{g}\in G^k$ such that 
$\omega w(\ov{g})=\omega w_n(\ov{g})\not=\omega$.
This implies immediately, that $w(\ov{g})\not =1$.
\hfill $\Box$\vskip2ex

\emph{Proof of Theorem A.}
Consider a non-trivial $k$-variable reduced word $w\in F_\infty$ 
of length $n=\lambda(G)$.
Theorem B provides a $k$-tuple $\ov{g}\in H^k$ such that $w(\ov{g})\not=1$. 
Therefore, $w$ is not a law in $G$.
This shows, that $\lambda(G)<\nu(G)$.
\hfill $\Box$%\\[2ex]

%%%%%%%%%%%%%%%%%% Bibliography %%%%%%%%%%%%%%%%%%%%%%%%%%%%%%%%%%%%%

\bibliographystyle{plain}
\bibliography{Larsen}
\end{document}